\newcommand{\purple}{\color{purple}}
\newcommand{\N}{\mathbb{N}}
\newcommand{\R}{\mathbb{R}}
\def\XXint#1#2#3{{\setbox0=\hbox{$#1{#2#3}{\int}$ }
\vcenter{\hbox{$#2#3$ }}\kern-.6\wd0}}
\newcommand{\norm}[1]{\left\|#1\right\|}
\newcommand{\parentheses}[1]{\left(#1\right)}
\newcommand{\brackets}[1]{\left[#1\right]}
\newcommand{\module}[1]{\left|#1\right|}
\newcommand{\braces}[1]{\left\{#1\right\}}
\renewcommand{\date}[1]{\setcounter{equation}{0} \vspace{1cm}\begin{center} {\Large\bf #1} \end{center}}
\newtheorem{statement}{Statement}[section]
\newtheorem{theorem}[statement]{Theorem}
\newtheorem*{conjecture*}{Conjecture}
\newtheorem{corollary}[statement]{Corollary}
\newtheorem{lemma}[statement]{Lemma}
\theoremstyle{definition}
\newtheorem{definition}[statement]{Definition}
\theoremstyle{remark}
\newtheorem{remark}[statement]{Remark}
\newtheorem{example}[statement]{Example}
\title{On average population levels for models with directed diffusion in heterogeneous environments}
\author{ André Rickes and Elena Braverman \\
Dept. of Math. and Stats., University of
Calgary, \\ 2500 University Drive N.W., Calgary, AB  T2N 1N4, Canada}
\begin{document}
\maketitle

\begin{abstract}
In 2006 (J. Differential  Equ.), Lou proved that, once the intrinsic growth rate $r$ in the logistic model is proportional to the  spatially heterogeneous carrying capacity $K$ ($r=K^1$), the total population under the regular diffusion exceeds the total of the carrying capacity.
DeAngelis et al (J. Math. Biol. 2016) argued that the prevalence of the population over the carrying capacity is only observed when   the growth rate and the carrying capacity are positively correlated, at least for slow dispersal. Guo et al (J. Math. Biol. 2020) justified that, once $r$ is constant ($r=K^0$), the total population is less than the cumulative carrying capacity. Our paper fills up the gap for when $r=K^{\lambda}$ for any real $\lambda$, disproving an assumption that there is a critical $\lambda^{\ast} \in (0,1)$ at which the tendency of the prevalence of the carrying capacity over the total population size changes, demonstrating instead that the relationship is more complicated.
In addition, we explore the dependency of the total population size on the diffusion coefficient when the third parameter of the dispersal strategy $P$ is involved: the diffusion term is $d \Delta(u/P)$, not just $d \Delta u$, for any $\lambda$. We outline some differences from the random diffusion case, in particular, concerning the profile of the total population as a function of the diffusion coefficient.


{\bf AMS subject classification:} 92D25, 35J25, 92D40, 35J60, 35B40
\\
{\bf Keywords:} spatial heterogeneity, random and directed diffusion, average population level, mathematical ecology
\end{abstract}























%

%


\section{Introduction}

The idea to include diffusion for a more comprehensive description of dispersal in population ecology was a significant breakthrough in analysing invasions and spatial species interactions \cite{Skellam}. 
Together with random walks approach, application of diffusion became a cornerstone in analysing populations movements and their effect \cite{Ang2016,Ang2016_a,Hastings,Levin}.
Since different species disperse in unique ways, their specific dispersal mechanisms must be accounted for when incorporating spatial heterogeneity into time-dependent mathematical models \cite{CCBook}.
The effects of spatial heterogeneity were explored in \cite{He1,He2,He3}. The environment variability was strongly connected to the notion of the carrying capacity which was actively discussed \cite{Mallet,Zhang}.

This paper studies the relationship between the carrying capacity of the environment and the total population size of a single species adopting different dispersal strategies. This connection was explored in  \cite{Ang2016,Ang2016_a,Guo_2020,Lou}, where population dynamics was modelled using the logistic equation with random diffusion 
\begin{equation}\label{intro1}
    \begin{cases}
        \frac{\partial u}{\partial t}(x,t)=d\Delta u(t,x) +r(x)u (t,x) \parentheses{1-\frac{u(t,x)}{K(x)}}, &  x \in \Omega, ~~ t>0 \\
        u(x,0)=u_0(x),& x\in\Omega \\
        \frac{\partial u}{\partial n}(t,x)=0 , &  x \in \partial\Omega,~~ t>0.
    \end{cases} 
\end{equation}
Here $r(x)$ and $K(x)$ are time-independent positive on $\bar\Omega$ functions representing the intrinsic growth rate and the carrying capacity, respectively. 

The disadvantage of considering that species diffuse randomly as in \eqref{intro1} is that, as the diffusion becomes more important in our model (when $d$ becomes large enough), the solution of \eqref{intro1.5}  goes away from the carrying capacity profile \cite{C1,C4,C2,C3}. 
This leads to the counterintuitive outcome that in the competition of several species identical in all the parameters but the value of the diffusion coefficient $d$ in \eqref{intro1},
the slower diffuser wins \cite{Dockery}. 
Multiple studies in mathematical biology indicated that it is reasonable to consider that populations diffuse in an ideal free dispersal framework, characterized by assuming populations can diffuse as they wish, but they choose to disperse from places with lower suitability to places where their chances of survival increase \cite{Bonte,CC1,CCL1}. Populations choosing an ideal free distribution are found to be harder to be invaded by competing species sharing their habitat \cite{CAMWA2016,C1,CCL2,Korobenko2012,Korobenko2014}, and therefore, a directed diffusion based on the heterogeneity of the environment becomes more present on the modelling of population dynamics, see \cite{CC_2025,C4} for a detailed overview. 

Let $u_d$ denote a stationary solution to \eqref{intro1}, i.e., let $u_d$ be a solution to
\begin{equation}\label{intro1.5}
    \begin{cases}
 \displaystyle             d\Delta u(x) +r(x)u (x) \parentheses{1-\frac{u(x)}{K(x)}}=0, &  x \in \Omega,\\
   \displaystyle           \frac{\partial u}{\partial n}(x) , &  x \in \partial\Omega.
    \end{cases} 
\end{equation}
Then, as the diffusion coefficient tends to zero, $u_d$ tends to the carrying capacity at every point of the domain $\Omega$,
and as $d \to +\infty$, the stationary solution $u_d$ is distributed uniformly over $\Omega$ \cite{Ang2016,Lou}. 
However, a problem still under investigation is the relation between the total stationary population for $0<d<+\infty$ and the total carrying capacity. 

Lou \cite{Lou} demonstrated that for 
$r(x) = \alpha K(x)$ for some constant $\alpha>0$, the total population size of the solution $u_d$ to \eqref{intro1.5} exceeds the total carrying capacity for every $d\in(0,+\infty)$, i.e.,
\begin{equation}\label{intro1a}
\int_{\Omega} u_d~ dx > \int_{\Omega} K~dx.
\end{equation}
On the other hand, in \cite{Guo_2020}, the authors justified that when $r(x)$ is constant, and therefore is independent of $K (x)$,  the logistic equation with spatially heterogeneous resources always supports a total population strictly smaller than the total
carrying capacity at equilibrium, which is just opposite to the case $r = \alpha K$. This result can be interpreted as, once the growth rate of species is unchanged over the domain, a uniform distribution of resources maximizes the population of species. To strengthen this result, Mazari-Fouquer \cite{MazariFouquer2024} quantifies the optimality of a constant carrying capacity on the maximization of average population of species, compared to other resource distribution profiles.

DeAngelis {\em et al} \cite{Ang2016} argued that, once the growth rate $r$ is not proportional to $K$, we can get different relations between the total population and the total carrying capacity. In particular, once these two functions are positively correlated (meaning that $r(x) = h(K(x))$ with $h'>0$), the total population of $u_d$ exceeds the total carrying capacity, while
for negatively correlated $r$ and $K$ ($r(x) = h(K(x))$ and $h'<0$), the total population of $u_d$  is less for slow dispersal. The  proofs of these results in \cite{Ang2016}  are strongly based on the estimation 
of the integral of $u_d$ for small $d>0$. 

Unlike this local approach, in \cite{Lou} inequality \eqref{intro1a} was justified globally (and the opposite in \cite{Guo_2020} for the constant growth rate), for any $d>0$.
In addition, it was noticed that the total solution equals the total of the carrying capacity when either $d=0$ or $d \to \infty$ when $r = \alpha K$. 
Then, a maximum of the average should be attained for some $d \in (0,\infty)$.
Although \cite{Lou} conjectures that
\begin{equation}
\label{def:integral}
M(d) = \int_{\Omega} u_d~ dx
\end{equation}
attains exactly one local maximum in $(0,+\infty)$, it was proven by \cite{Liang2012} the existence of a carrying capacity profile that induces multiple local maxima for the average of $u_d$.

Connecting the results of \cite{Lou}  and \cite{Guo_2020}, we notice that for $r=\alpha K=\alpha K^1$, the total population $u_d$ exceeds the total carrying capacity, 
while for $r=\alpha =\alpha K^0$ it is less. None of the two papers explores the case $r=\alpha K^{\lambda}$, $\lambda \in (0,1)$,
and the current paper fills this gap. The naive suggestion that there is some critical $\lambda^{\ast}  \in (0,1)$  such that for larger values \eqref{intro1a} holds,
while the opposite inequality is satisfied for smaller $\lambda$ is incorrect, as the situation is more complicated.
Moreover, we consider any $\lambda$, not only in $[0,1]$.

In addition, we explore the model incorporating the third space-dependent parameter associated with dispersal. 
A strategy used to alleviate the effect of random diffusion with directed advection suggested in \cite{Brav} assumed that we can model the diffusing quantity in \eqref{intro1} as not $u$, but $u/P$, where $P$ is the dispersal strategy adopted by the species. Then \eqref{intro1.5} is  replaced by
\begin{equation}\label{intro2}
    \begin{cases}
  \displaystyle       d\Delta\parentheses{\frac{u}{P}}+ru\parentheses{1-\frac{u}{K}}=0, & x \in \Omega,  \\
  \displaystyle            \frac{\partial}{\partial n}\parentheses{\frac{u}{P}}=0,  & x \in \partial\Omega.
    \end{cases} 
\end{equation} 
If $P$ is proportional to $K$, the carrying capacity is a solution of \eqref{intro2}, independently of $d$, and species will evolve based on the available resources.
Several weighted inequalities connecting $r,K,P$ with the stationary solution $u_d$ were obtained in \cite{CAMWA2016,Korobenko2012,Korobenko2014}
as auxiliary statements on the way of demonstrating that $P=K$ in \eqref{intro2} is a winning strategy, allowing the species choosing it either to protect its own habitat or invade a habitat of a competitor  choosing a different way to disperse, or considering two different species cooperating in dispersal strategies in a sense that $K$ is a linear combination of two dispersal strategies with positive coefficients. However, explicit relations as in \eqref{intro1a} have never been considered, and in this paper, we adapt the strategies from \cite{Ang2016} to obtain the desired estimates. If several species coexist choosing different dispersal strategies, this can lead to coexistence when each of the populations occupies its niche in the spatial distribution of the available resources \cite{Averill,CAMWA2016}.
If we relate the results of \cite{Dockery} to those in \cite{Lou}, this does not give an advantage to a competitor achieving higher density in the absence of others.
Otherwise, an intermediate diffusion coefficient where the maximum density is attained would be optimal.


The purpose of this paper is two-fold:
\begin{enumerate}
\item
We analyze the behaviour of the total population $M(d)$ when the intrinsic growth rate of species is described by $r=\alpha\parentheses{\frac{K}{P}}^\lambda$.
In particular, when $P \equiv 1$ as in \eqref{intro1}, these results fill the gap between the results of \cite{Lou}  and \cite{Guo_2020}.
\item
For the model with three parameters $P,r,K$ as in \eqref{intro2}, we explore the relation of the total population and the total carrying capacity, coinciding with \eqref{intro1a}
or the opposite inequality, and establish sufficient conditions for both. Overall, we investigate  the dependency of the total  population $M(d)$ on $d$;
\end{enumerate}

In particular, we answer the following questions.

\begin{enumerate}
\item
Are there specific choices of the dispersal strategy $P$ in \eqref{intro2} that guarantee \eqref{intro1a}?
\item Since \eqref{intro1a} holds for every $d\in(0,+\infty)$ when $r=\alpha\frac{K}{P}$, what happens when $r=\alpha\parentheses{\frac{K}{P}}^\lambda$?

\end{enumerate}

Further, let us describe the structure of the paper. In Section~\ref{sec:auxil} we justify the solution limits for infinitesimal or very large diffusion coefficients.
Section~\ref{sec:main} contains the main results of the paper.
In particular, we justify under which conditions \eqref{intro2} is satisfied for any $d$, and explore the situations when either  \eqref{intro2}  or the opposite inequality are satisfied for small $d$. In Section~\ref{sec:examples}, we provide examples illustrating the main results and simulating the cases when the theoretical results fail to establish the relations between the total population and the total carrying capacity. 
Section~\ref{sec:discussion} contains discussion of both obtained results and remaining open questions. Appendix includes some additional weighted inequalities connecting the total population and the total carrying capacity of the environment.


\section{Limit Population Sizes for Very Slow or Very Fast Diffusion}
\label{sec:auxil}

Let $\Omega$ be a bounded open region of $\R^n$. In this paper, we assume that\\

\noindent (A) \,\,$r$, $K$, and $P$ are non-constant, positive functions over $\overline{\Omega}$ such that $r$, $K$, $P\in C^2(\overline{\Omega})$.\\ 

Let us denote the Sobolev space $W^{1,2}(\Omega)=\braces{f\in L^1(\Omega):\,\,\,\nabla f\in L^2(\Omega)}$, where  $\nabla f$ is considered in the weak sense, equipped with the norm
\begin{equation*}
    \norm{f}_{W^{1,2}(\Omega)}=\parentheses{\norm{f}_{L^2(\Omega)}+\norm{\nabla f}_{L^2(\Omega)}}^\frac{1}{2}.
\end{equation*}

Denote by $u_d$ a solution for \eqref{intro2} for each $d>0$.

\begin{lemma}\label{Prop_1_DeAngelis}
    As $d\to0^+$, we obtain
    \begin{equation*}
         u_d\to K\hspace{0.5cm} in \hspace{0.5cm} L^\infty(\overline{\Omega})\cap W^{1,2}(\Omega).
    \end{equation*}
\end{lemma}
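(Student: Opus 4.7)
The strategy is to reduce \eqref{intro2} to the standard logistic form of \eqref{intro1.5} via the substitution $v_d := u_d/P$, then adapt the energy argument used in \cite{Ang2016} for the random diffusion case. Under (A) the function $P$ is bounded away from zero and from infinity on $\overline{\Omega}$, so $v_d \to K/P$ in $L^\infty(\overline{\Omega}) \cap W^{1,2}(\Omega)$ is equivalent to $u_d \to K$ in the same spaces. A direct computation shows $v_d$ satisfies
\begin{equation*}
d\Delta v_d + \widetilde{r}(x)\, v_d\!\left(1-\frac{v_d}{\widetilde{K}(x)}\right) = 0 \text{ in } \Omega, \qquad \frac{\partial v_d}{\partial n}=0 \text{ on } \partial\Omega,
\end{equation*}
with effective growth rate $\widetilde{r}:=rP$ and effective carrying capacity $\widetilde{K}:=K/P$, both in $C^2(\overline{\Omega})$ and uniformly positive.

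Next I would establish a uniform $L^\infty$ bound. Applying the maximum principle at an interior maximum of $v_d$ forces $\widetilde{r}v_d(1-v_d/\widetilde{K})\ge 0$ there, hence $v_d \le \max_{\overline{\Omega}}\widetilde{K}$; the Hopf lemma handles boundary maxima via the Neumann condition, and the same reasoning with minima yields $\min_{\overline{\Omega}}\widetilde{K}\le v_d\le \max_{\overline{\Omega}}\widetilde{K}$, independently of $d$. Testing the equation against $v_d - \widetilde{K}$ and integrating by parts then gives
\begin{equation*}
d\int_\Omega |\nabla v_d|^2 \,dx + \int_\Omega \frac{\widetilde{r}\, v_d}{\widetilde{K}}\,(v_d-\widetilde{K})^2\,dx = d\int_\Omega \nabla v_d \cdot \nabla \widetilde{K}\,dx,
\end{equation*}
and Young's inequality absorbs the right-hand side into the first term, producing
\begin{equation*}
\frac{d}{2}\int_\Omega |\nabla v_d|^2\,dx + \int_\Omega \frac{\widetilde{r}\, v_d}{\widetilde{K}}\,(v_d-\widetilde{K})^2\,dx \le \frac{d}{2}\int_\Omega |\nabla \widetilde{K}|^2\,dx.
\end{equation*}
Since $\widetilde{r} v_d/\widetilde{K}$ is uniformly bounded below by a positive constant (using the $L^\infty$ bound and (A)), this yields $\|v_d-\widetilde{K}\|_{L^2}^2 = O(d)$ and the uniform bound $\|\nabla v_d\|_{L^2}\le \|\nabla \widetilde{K}\|_{L^2}$.

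From here, $v_d \rightharpoonup \widetilde{K}$ in $W^{1,2}(\Omega)$; combining the norm inequality $\limsup \|\nabla v_d\|_{L^2}\le \|\nabla \widetilde{K}\|_{L^2}$ with weak lower semicontinuity gives convergence of norms and hence strong $W^{1,2}$ convergence. To upgrade $L^2$ convergence to $L^\infty$, I would rewrite $w_d:=v_d-\widetilde{K}$ as the solution of a linear elliptic equation $d\Delta w_d + a_d(x)w_d = f_d(x)$ with coefficients $a_d,f_d$ uniformly bounded in $L^\infty$, $f_d \to 0$ in $L^2$, and then apply standard $W^{2,p}$ elliptic regularity together with the Sobolev embedding $W^{2,p}\hookrightarrow C^0$ for $p$ large enough (or a De Giorgi/Moser $L^p$-to-$L^\infty$ iteration), using the uniform $L^\infty$ bound on $v_d$ to keep constants under control. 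Finally, multiplying by $P\in C^2(\overline{\Omega})$ transfers convergence from $v_d\to \widetilde{K}$ to $u_d\to K$ in both norms.

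The main obstacle is the last step: the energy method naturally delivers only $L^2$ (hence $W^{1,2}$) convergence, and passing to $L^\infty$ requires the elliptic regularity constants to be tracked carefully so that the $L^2$ smallness of the right-hand side translates into uniform smallness of $w_d$ despite the small parameter $d$ multiplying the Laplacian. An alternative avoiding regularity is a barrier/contradiction argument exploiting (A) and the structure of the reaction term, which I would fall back on if the direct regularity route encountered $d$-dependent blow-up.
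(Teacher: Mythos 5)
Your plan is viable overall, but it inverts the paper's order of operations, and the one step you yourself flag as the obstacle is exactly the step where your primary route breaks down. The paper proves the $L^\infty$ convergence \emph{first}, by exhibiting explicit barriers $\overline{u}=K+\alpha Pd$ and $\underline{u}=K-\alpha Pd$ with $\alpha = 2\norm{\Delta(K/P)}_{L^\infty}/\min_{\overline{\Omega}}(rP)$, checking $L\overline{u}\le 0\le L\underline{u}$, and concluding $|u_d-K|\le \alpha(\max P)\,d$; only then does it obtain the $W^{1,2}$ convergence by testing the equation for $\varphi=u_d-K$ against $\varphi/(Pd)$ and using the already-established $L^\infty$ smallness to kill the right-hand side. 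Your energy argument for the $W^{1,2}$ part is a genuine and correct alternative: after the substitution $v_d=u_d/P$ (which does produce the logistic equation with $\widetilde r=rP$, $\widetilde K=K/P$ and Neumann data), testing against $v_d-\widetilde K$, applying Young, and invoking weak convergence plus convergence of norms gives strong $W^{1,2}$ convergence \emph{without} first knowing $L^\infty$ convergence, which the paper's $W^{1,2}$ step cannot do. What your route buys is self-containedness of the gradient estimate; what the paper's buys is an explicit $O(d)$ rate in $L^\infty$ essentially for free.

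The genuine weak point is your primary $L^\infty$ argument. Writing $w_d=v_d-\widetilde K$, the linear equation is $d\Delta w_d-\frac{\widetilde r v_d}{\widetilde K}w_d=-d\Delta\widetilde K$; to apply $W^{2,p}$ estimates you must divide by $d$, which blows up both the zeroth-order coefficient and the data, so ``standard elliptic regularity with $f_d\to0$ in $L^2$'' does not deliver a $d$-uniform $C^0$ bound -- the constants are $O(1/d)$. The rescue is not regularity theory but the observation that the right-hand side $-d\Delta\widetilde K$ is $O(d)$ in $L^\infty$ (not merely $o(1)$ in $L^2$), so a direct maximum-principle/comparison argument on $w_d$ -- at an interior positive maximum, $\Delta w_d\le0$ forces $w_d\le d\norm{\Delta\widetilde K}_{L^\infty}/c_0$ where $c_0=\min(\widetilde r v_d/\widetilde K)>0$ -- gives $\norm{w_d}_{L^\infty}=O(d)$ with no regularity machinery. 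This is precisely your ``fallback'' and precisely the paper's method; it should be your main route, not the contingency. You should also note that the lower barrier only works for $d$ small enough that $K-\alpha Pd>0$, and that the uniform positive lower bound on $\widetilde r v_d/\widetilde K$ needs the maximum-principle bound $v_d\ge\min\widetilde K>0$ you established earlier, so the order of your steps matters.
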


\begin{proof}

    First we show convergence in $L^\infty(\Omega)$. For that purpose, we show that, for $d>0$ small, we obtain $\norm{u_d-K}_{L^\infty(\Omega)}\leq \parentheses{\alpha \max_{x\in\overline{\Omega}}P(x)}d$, where
    \begin{equation*}
        \alpha =2\frac{\norm{\Delta(\frac{K}{P})}_{L^\infty(\overline{\Omega})}}{\min_{x\in\overline{\Omega}}(r(x)P(x))}.
    \end{equation*}
    Denote the operator
    \begin{equation*}
        Lu=d\Delta\parentheses{\frac{u}{P}}+ru\parentheses{1-\frac{u}{K}}.
    \end{equation*}
    For each $d>0$, let $\overline{u}(x)=K(x)+\alpha P(x)d$, and compute
    \begin{align*}
        L\overline{u}=&d\Delta\parentheses{\frac{K}{P}}+d\Delta(\alpha d)+r(K+\alpha Pd)\parentheses{-\frac{\alpha Pd}{K}}
\\  = &d\brackets{\Delta\parentheses{\frac{K}{P}}-\alpha rP\parentheses{1+\frac{\alpha Pd}{K}}}
\\
        \leq  & d\brackets{\Delta\parentheses{\frac{K}{P}}-\alpha rP}\\
        =&d\brackets{\Delta\parentheses{\frac{K}{P}}-2\frac{\norm{\Delta(\frac{K}{P})}_{L^\infty(\overline{\Omega})}}{\min_{x\in\overline{\Omega}}(r(x)P(x))}rP}
\\  \leq &  d\brackets{\Delta\parentheses{\frac{K}{P}}-2\norm{\Delta\parentheses{\frac{K}{P}}}_{L^\infty(\overline{\Omega})}}\leq0.
    \end{align*}
    Therefore, $\overline{u}$ is an upper solution for $Lu=0$.

    If $\underline{u}(x)=K(x)-\alpha P(x)d$, we see that $\underline{u}>0$ once $d>0$ is small ($0<d< \min_{\overline{\Omega}}(K/P)/(2\alpha)$), and
    \begin{equation*}
        L\underline{u}=d\Delta\parentheses{\frac{K}{P}}+r(K-\alpha Pd)\frac{\alpha Pd}{K}=d\brackets{\Delta\parentheses{\frac{K}{P}}+\frac{1}{2}\alpha rP+\alpha rP\parentheses{\frac{1}{2}-\frac{\alpha Pd}{K}}},
    \end{equation*}
    and since $\frac{1}{2}-\frac{\alpha Pd}{K}>0$ for $0<d< \min_{\overline{\Omega}}(K/P)/(2\alpha)$, we obtain
     \begin{align*}
      L\underline{u} & \geq d\brackets{\Delta\parentheses{\frac{K}{P}}+\frac{1}{2}2\frac{\norm{\Delta(\frac{K}{P})}_{L^\infty(\overline{\Omega})}}{\min_{x\in\overline{\Omega}}(r(x)P(x))}rP}
\\ & \geq d\brackets{\Delta\parentheses{\frac{K}{P}}+\norm{\Delta \left( \frac{K}{P} \right)}_{L^\infty(\overline{\Omega} )}}\geq 0,
    \end{align*}
    and we conclude that  $\underline{u}$ is a lower solution for $Lu=0$. And since $\underline{u}<\overline{u}$ in $\Omega$ and $u_d$ is the solution to $Lu=0$, we obtain $\underline{u}<u_d<\overline{u}$ in $\Omega$, which implies
    \begin{equation*}
        K(x)-\alpha P(x)d<u_d(x)<K(x)+\alpha P(x)d \implies |u_d(x)-K(x)|<\alpha P(x)d\leq\parentheses{\alpha \max_{x\in\overline{\Omega}}P(x)}d,
    \end{equation*}
    for any $x\in\Omega$ and $d>0$ small. Therefore, $u_d\to K$ in $L^\infty(\Omega)$ as $d\to 0^+$, and since $u_d$ is bounded and continuous in $\overline{\Omega}$ we also obtain convergence in $L^\infty(\overline{\Omega})$.

    Finally, to show convergence on $W^{1,2}(\Omega)$, set $\varphi(x)=u_d-K(x)$ and notice that $\varphi$ satisfies
    \begin{equation}\label{Eq.1 Feb25th}
        d\Delta\parentheses{\frac{\varphi}{P}}+d\Delta\parentheses{\frac{K}{P}}+\frac{r}{K} u_d(-\varphi)=0.
    \end{equation}

    Multiplying \eqref{Eq.1 Feb25th} by $\frac{\varphi}{Pd}$ and integrating, we obtain 
    \begin{align*}
        &\int_\Omega\Delta\parentheses{\frac{\varphi}{P}}\frac{\varphi}{P}\,dx+\int_\Omega\Delta\parentheses{\frac{K}{P}}\frac{\varphi}{P}\,dx - \int_\Omega \frac{r}{K}\frac{u_d\varphi^2}{Pd}\,dx=0\\
        \implies& -\int_\Omega\module{\nabla\parentheses{\frac{\varphi}{P}}}^2\,dx+\int_\Omega\Delta\parentheses{\frac{K}{P}}\frac{\varphi}{P}\,dx - \int_\Omega \frac{r}{K}\frac{u_d\varphi^2}{Pd}\,dx=0
    \end{align*}
    and thus, we conclude that
    \begin{equation*}
        \int_\Omega\module{\nabla\parentheses{\frac{\varphi}{P}}}^2\,dx=\int_\Omega\Delta\parentheses{\frac{K}{P}}\frac{\varphi}{P}\,dx-\int_\Omega\frac{r}{K}\frac{u_d\varphi^2}{dP}\,dx\leq\int_\Omega \varphi\norm{\frac{1}{P}\Delta\parentheses{\frac{K}{P}}}_{L^\infty(\Omega)}\,dx,
    \end{equation*}
    which tends to 0 as $d\to0^+$, since $\varphi=u_d-K\to0$ in $L^\infty(\Omega)$. Therefore, as $d\to0^+$, 
    \begin{equation*}
        \int_\Omega\module{\nabla\parentheses{\frac{\varphi}{P}}}^2\,dx\to0,
    \end{equation*}
    and since $\frac{1}{P}$, $\nabla\parentheses{\frac{1}{P}}$ are bounded on $\bar\Omega$, we conclude that $u_d\to K$ in $W^{1,2}(\Omega)$ as $d\to0^+$.
\end{proof}

\begin{lemma}\label{Prop 2 DeAngelis}
    As $d\to+\infty$, we get
        \begin{equation*}
            u_d\to \beta P(x)\hspace{0.5cm} in \hspace{0.5cm} L^\infty(\overline{\Omega}),
        \end{equation*}
    where $\displaystyle \beta=\frac{\int_\Omega rP\,dx}{\int_\Omega\frac{r}{K}P^2\,dx}$.

\end{lemma}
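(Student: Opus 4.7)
The plan is to rewrite problem \eqref{intro2} in terms of the ratio $v_d := u_d/P$, which satisfies
\begin{equation*}
d\,\Delta v_d + r\,v_d\,P\parentheses{1 - \frac{v_d P}{K}} = 0 \quad \text{in } \Omega, \qquad \frac{\partial v_d}{\partial n} = 0 \quad \text{on } \partial\Omega.
\end{equation*}
The guiding heuristic is that as $d \to +\infty$ the term $\Delta v_d$ is forced to be $O(1/d)$, so together with the homogeneous Neumann data this should squeeze $v_d$ uniformly to a constant $\beta$; the value of $\beta$ is then fixed by integrating the equation against the constant $1$. The first step is to get uniform-in-$d$ $L^\infty$ bounds on $v_d$ via the sub/supersolution method, in the spirit of Lemma~\ref{Prop_1_DeAngelis}. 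Setting $C := \max_{\overline{\Omega}}(K/P)$ and $c := \min_{\overline{\Omega}}(K/P) > 0$, the functions $\overline{u}:=CP$ and $\underline{u}:=cP$ are a super- and a subsolution respectively, since the ratios $\overline{u}/P\equiv C$ and $\underline{u}/P\equiv c$ are constants (so the diffusion term vanishes and the Neumann condition holds trivially), while the logistic reaction term has the correct sign by the choice of $C$ and $c$. Comparison then gives $c\le v_d\le C$ on $\overline{\Omega}$ uniformly in $d$, and consequently, directly from the equation, $\norm{\Delta v_d}_{L^\infty(\Omega)} \le M/d$ for some $M$ independent of $d$.

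Next I would show that $v_d$ is close to its mean $\bar v_d := |\Omega|^{-1}\int_\Omega v_d\,dx$. The $W^{2,p}$ regularity estimate for the Neumann Laplacian gives, for any $p < \infty$,
\begin{equation*}
\norm{v_d - \bar v_d}_{W^{2,p}(\Omega)} \le C_p\,\norm{\Delta v_d}_{L^p(\Omega)} \le C_p'/d.
\end{equation*}
Choosing $p$ large enough that $W^{2,p}(\Omega) \hookrightarrow L^\infty(\Omega)$ by Sobolev embedding yields $\norm{v_d - \bar v_d}_{L^\infty(\Omega)} = O(1/d)$. Since $\bar v_d \in [c,C]$ is bounded, a subsequence satisfies $\bar v_d\to \beta$ for some $\beta\in[c,C]$, and along that subsequence $v_d \to \beta$ uniformly on $\overline{\Omega}$.

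To identify $\beta$ and to prove that the full family (not merely a subsequence) converges, I would integrate \eqref{intro2} over $\Omega$, apply the divergence theorem together with the Neumann data to annihilate the diffusion contribution, and pass to the uniform limit:
\begin{equation*}
0 \,=\, \int_\Omega r\,v_d\,P\parentheses{1 - \frac{v_d P}{K}}\,dx \;\longrightarrow\; \beta \int_\Omega rP\,dx - \beta^2 \int_\Omega \frac{rP^2}{K}\,dx.
\end{equation*}
Since $\beta \ge c > 0$, this forces $\beta = \int_\Omega rP\,dx \big/ \int_\Omega (r/K)P^2\,dx$, matching the claimed value; uniqueness of the positive root then implies that every subsequential limit coincides and the whole family $(u_d)$ converges to $\beta P$ in $L^\infty(\overline{\Omega})$.

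The main technical obstacle I foresee is upgrading the natural energy-type decay to uniform convergence, which is exactly what is needed in order to pass to the limit in the quadratic reaction term when identifying $\beta$. A direct Poincar\'e--Wirtinger argument applied after multiplying the equation by $v_d - \bar v_d$ only provides $\norm{v_d-\bar v_d}_{L^2(\Omega)} = O(d^{-1/2})$; the $W^{2,p}$ Neumann regularity estimate coupled with Sobolev embedding is the essential device that bridges this gap and delivers the $L^\infty$ control the rest of the argument relies upon.
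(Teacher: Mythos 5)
Your proposal is correct, and it takes a genuinely different route from the paper's proof. The paper integrates the equation to get $\int_\Omega r u_d\parentheses{1-\frac{u_d}{K}}\,dx=0$, then invokes Arzel\`a--Ascoli on the family $\braces{u_d}$ (asserting, without proof, that $u_d$ and $\nabla u_d$ are uniformly bounded) to extract a uniformly convergent subsequence, passes to the limit in $\Delta\parentheses{\frac{u_{d_k}}{P}}=-\frac{1}{d_k}ru_{d_k}\parentheses{1-\frac{u_{d_k}}{K}}$ to conclude the limit is $\beta P$, and finally rules out $\beta=0$ by a separate contradiction argument using the sign of the integrand. You instead work with $v_d=u_d/P$ and build the explicit comparison functions $cP$ and $CP$ with $c=\min_{\overline\Omega}(K/P)$, $C=\max_{\overline\Omega}(K/P)$; this supplies the uniform two-sided bounds that the paper merely asserts, and the lower bound $v_d\geq c>0$ disposes of the root $\beta=0$ for free. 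Your replacement of soft compactness by the $W^{2,p}$ Neumann estimate plus Sobolev embedding is the more substantive difference: it yields the quantitative rate $\norm{v_d-\bar v_d}_{L^\infty(\Omega)}=O(1/d)$, which is strictly more information than the paper obtains, at the cost of requiring $\partial\Omega$ regular enough for the Agmon--Douglis--Nirenberg Neumann estimate (a hypothesis the paper does not state explicitly but needs implicitly anyway for its own regularity claims). The identification of $\beta$ from the integrated equation and the subsequence-of-a-subsequence argument for convergence of the full family are common to both proofs. In short, your argument is a valid and arguably more complete alternative; the paper's is shorter but leans on unproven uniform gradient bounds.
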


\begin{proof}
    
    Integrating \eqref{intro2} over $\Omega$ and using the boundary condition $\frac{\partial}{\partial n}(\frac{u_d}{P})=0$ on $\partial\Omega$, we obtain
    \begin{equation}\label{Eq.2 Feb25th}
        \int_\Omega ru_d\parentheses{1-\frac{u_d}{K}}\,dx=0.
    \end{equation}
    Now, since $u_d$ and $\nabla u_d$ are bounded in $L^\infty(\Omega)$, we obtain from Arzelà-Ascoli theorem that there exists a subsequence $u_{d_k}$, $\{d_k\}\subset(0,+\infty)$ an increasing sequence, such that $u_{d_k}\to u_\infty$ uniformly in $\Omega$. Now, since from \eqref{intro2} 
    \begin{equation*}
        \Delta\parentheses{\frac{u_{d_k}}{P}}+\frac{1}{d_k}ru_{d_k}\parentheses{1-\frac{u_{d_k}}{K}}=0
    \end{equation*}
    for every $k\in\N$, take $k\to+\infty$ to obtain
    \begin{equation*}
        \Delta \parentheses{\frac{u_\infty}{P}}=0,
    \end{equation*}
    and since $\frac{\partial}{\partial n} (\frac{u_\infty}{P})=0$ in $\partial\Omega$, we obtain $u_\infty(x)=\beta P(x)$ for some constant $\beta \geq 0$.

    From \eqref{Eq.2 Feb25th} applied to $d=d_k$, let $k\to+\infty$ to obtain 
    \begin{equation*}
        0=\int_{\Omega}r(x)\beta P(x)\parentheses{1-\frac{\beta P(x)}{K(x)}}\,dx=\beta \int_\Omega r(x)P(x)\,dx-\beta ^2\int_\Omega\frac{r(x)}{K(x)}P(x)^2\,dx
    \end{equation*}
    which implies
    \begin{equation*}
        \beta=0\,\,\,\,\textrm{ or }\,\,\,\, \beta =\frac{\int_\Omega rP\,dx}{\int_\Omega\frac{r}{K}P^2\,dx}.
    \end{equation*}

    If we had $\beta =0$, then $u_{d_k}\to0$ on the $L^\infty(\overline{\Omega})$ norm, for there exists $k\in\N$ such that $1-\frac{u_{d_k}}{K}>0$ in $\overline{\Omega}$. Since $u_{d_k}>0$ on $\overline{\Omega}$, we then have
    \begin{equation*}
        \int_\Omega ru_{d_k}\parentheses{1-\frac{u_{d_k}}{K}}\,dx>0,
    \end{equation*}
    which contradicts \eqref{Eq.2 Feb25th}.
    
\end{proof}

\begin{corollary}
\label{cor:linear_dependence}
If $\displaystyle P = \alpha \frac{K}{r}$ for some $\alpha >0$ then
$$
\lim_{d \to +\infty} \int_{\Omega} u_d~dx = \int_{\Omega} K dx.
$$ 
\end{corollary}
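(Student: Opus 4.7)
The plan is to invoke Lemma~\ref{Prop 2 DeAngelis} directly and then carry out a short substitution. By that lemma, as $d \to +\infty$ the solutions $u_d$ converge to $\beta P$ uniformly on $\overline{\Omega}$, where
$$
\beta \;=\; \frac{\int_\Omega rP \, dx}{\int_\Omega \frac{r}{K} P^2 \, dx}.
$$
Since $\Omega$ is bounded and the convergence is in $L^\infty(\overline{\Omega})$, one immediately obtains
$$
\lim_{d\to+\infty} \int_\Omega u_d \, dx \;=\; \beta \int_\Omega P \, dx.
$$
So the entire task reduces to verifying that $\beta \int_\Omega P \, dx = \int_\Omega K \, dx$ under the hypothesis $P = \alpha K / r$.

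The substitution is straightforward. First I would simplify the two integrals defining $\beta$: with $P = \alpha K/r$ one has $rP = \alpha K$, hence $\int_\Omega rP\,dx = \alpha \int_\Omega K \, dx$; and $\frac{r}{K} P^2 = \frac{r}{K} \cdot \frac{\alpha^2 K^2}{r^2} = \alpha^2 \frac{K}{r}$, so $\int_\Omega \frac{r}{K}P^2 \, dx = \alpha^2 \int_\Omega \frac{K}{r} \, dx$. This yields
$$
\beta \;=\; \frac{\int_\Omega K \, dx}{\alpha \int_\Omega \frac{K}{r}\, dx}.
$$
Then, using once more $P = \alpha K/r$, I would compute
$$
\beta \int_\Omega P \, dx \;=\; \frac{\int_\Omega K \, dx}{\alpha \int_\Omega \frac{K}{r}\, dx} \cdot \alpha \int_\Omega \frac{K}{r}\, dx \;=\; \int_\Omega K \, dx,
$$
which is exactly the claim.

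There is no real obstacle here: the corollary is a direct consequence of Lemma~\ref{Prop 2 DeAngelis} combined with the algebraic cancellation that occurs precisely when $P$ is proportional to $K/r$. The only thing worth emphasising is that the proportionality $P \propto K/r$ is exactly the relation that makes both the numerator and the denominator of $\beta$ reducible to constant multiples of $\int_\Omega K \, dx$ and $\int_\Omega K/r \, dx$ respectively, so that the remaining factor $\int_\Omega P \, dx$ cancels the denominator perfectly.
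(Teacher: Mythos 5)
Your proof is correct and follows exactly the same route as the paper: apply Lemma~\ref{Prop 2 DeAngelis} to get $u_d \to \beta P$ in $L^\infty$ and then substitute $P = \alpha K/r$ into the formula for $\beta$. In fact you are slightly more careful than the paper, since you carry the constant $\alpha$ through explicitly and spell out the final cancellation $\beta \int_\Omega P\,dx = \int_\Omega K\,dx$, which the paper leaves implicit.
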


\begin{proof}
    From Proposition \ref{Prop 2 DeAngelis}, $u_d\to \beta \frac{K}{r}$ in $L^\infty(\Omega)$ as $d\to+\infty$, where
\begin{equation*}
    \beta=\frac{\int_\Omega r\frac{K}{r}\,dx}{\int_\Omega\frac{r}{K}\brackets{\frac{K}{r}}^2\,dx}=\frac{\int_\Omega K\,dx}{\int_\Omega \frac{K}{r}\,dx}.
\end{equation*}
\end{proof}

%
%

\section{Main Results}
\label{sec:main}

\subsection{The choice of the dispersal strategy to guarantee abundance}

First of all, we explore the following question: can a species choose such a dispersal strategy $P$ that its total population exceeds the total carrying capacity?
The answer is positive, once $P$ is proportional to the ratio of the carrying capacity and the intrinsic growth rate, while this intrinsic growth rate is non-constant. In other words, the areas of attraction should have higher
available resources with smaller $r$.

\begin{theorem}
\label{Lou_generalize}
Let (A) be satisfied and $\displaystyle P = \alpha \frac{K}{r}$ for some $\alpha >0$, then the solution $u_d$ of \eqref{intro2} satisfies \eqref{intro1a} for every $d>0$.
\end{theorem}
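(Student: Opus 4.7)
The strategy is to imitate Lou's multiplier argument from the classical case $P \equiv 1$, $r = \alpha K$, adapted to the present weighted diffusion. The choice $P = \alpha K/r$ is exactly what turns the reaction terms, after integration against the right test function, into a clean multiple of $\int_\Omega(u_d - K)\,dx$; and the Neumann condition on $u_d/P$ is exactly what makes the diffusion piece nonnegative.

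Concretely, I would multiply the PDE in \eqref{intro2} by $P/u_d$ and integrate over $\Omega$:
\begin{equation*}
d\int_\Omega \frac{P}{u_d}\,\Delta\!\left(\frac{u_d}{P}\right) dx + \int_\Omega rP\!\left(1-\frac{u_d}{K}\right) dx = 0.
\end{equation*}
Since $P = \alpha K/r$ gives $rP = \alpha K$ and $rPu_d/K = \alpha u_d$, the reaction integral collapses to $\alpha\!\left(\int_\Omega K\,dx - \int_\Omega u_d\,dx\right)$. For the diffusion integral, set $w := u_d/P$, so that $P/u_d = 1/w$; integration by parts using $\partial_n w = 0$ on $\partial\Omega$ yields
\begin{equation*}
\int_\Omega \frac{\Delta w}{w}\,dx = -\int_\Omega \nabla w \cdot \nabla\!\left(\frac{1}{w}\right) dx = \int_\Omega \frac{|\nabla w|^2}{w^2}\,dx \geq 0.
\end{equation*}
Combining the two pieces gives
\begin{equation*}
\alpha\!\left(\int_\Omega u_d\,dx - \int_\Omega K\,dx\right) = d\int_\Omega \frac{|\nabla(u_d/P)|^2}{(u_d/P)^2}\,dx \geq 0,
\end{equation*}
which is already the weak form of \eqref{intro1a}.

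The only step that requires care is upgrading $\geq 0$ to $> 0$. Equality would force $w = u_d/P$ to be a positive constant $c$; inserting $u_d = cP$ into \eqref{intro2} gives $rcP(1 - cP/K) \equiv 0$, hence $cP \equiv K$ on $\overline{\Omega}$, and combined with $P = \alpha K/r$ this forces $r \equiv c\alpha$ to be constant, contradicting assumption (A). Thus $u_d/P$ is non-constant, the right-hand side is strictly positive for every $d > 0$, and \eqref{intro1a} follows. The only real obstacle is identifying the correct multiplier $P/u_d$; once that is chosen, the rest is a short integration-by-parts computation plus a one-line rigidity argument for strict inequality.
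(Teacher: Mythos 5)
Your proposal is correct and follows essentially the same route as the paper: multiply by $P/u_d$, use $rP=\alpha K$ to collapse the reaction term to $\alpha\int_\Omega(K-u_d)\,dx$, integrate the diffusion term by parts with the Neumann condition to get $\int_\Omega|\nabla(u_d/P)|^2(P/u_d)^2\,dx\geq 0$, and rule out equality by noting that a constant $u_d/P$ would force $r$ to be constant, contradicting (A). Nothing further is needed.
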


\begin{proof}
    Setting $r=\alpha\frac{K}{P}$ on \eqref{intro2}, we obtain
    \begin{equation}\label{Eq1 r=K/P}
    \begin{cases}
        d\Delta\parentheses{\frac{u_d}{P}}+\frac{\alpha}{P}u_d\parentheses{K-u_d}=0 & \mbox {on }  \,\,\,\Omega, \\
        \frac{\partial}{\partial n}\parentheses{\frac{u_d}{P}}=0  & \mbox {on } \,\partial\Omega.
    \end{cases} 
\end{equation}
    Multiply Equation \eqref{Eq1 r=K/P} by $\frac{P}{u_d}$ and integrate over $\Omega$, to get
	\begin{equation}\label{Eq2 r=K/P}
		d\int_\Omega\Delta\parentheses{\frac{u_d}{P}}\frac{P}{u_d}\,dx+\alpha\int_\Omega(K-u_d)\,dx=0.
	\end{equation}
From the divergence theorem and the boundary condition of \eqref{Eq1 r=K/P}, we rewrite the first term of \eqref{Eq2 r=K/P} as
\begin{align*}
    \int_\Omega\Delta\parentheses{\frac{u_d}{P}}\frac{P}{u_d}\,dx&=-\int_\Omega \nabla\parentheses{\frac{u_d}{P}}\cdot\nabla\parentheses{\frac{P}{u_d}}\,dx=-\int_\Omega \nabla\parentheses{\frac{u_d}{P}}\cdot\nabla\parentheses{\parentheses{\frac{u_d}{P}}^{-1}}\,dx\\
    &=\int_\Omega\nabla\parentheses{\frac{u_d}{P}}\cdot\nabla\parentheses{\frac{u_d}{P}}\parentheses{\frac{u_d}{P}}^{-2}\,dx=\int_\Omega \module{\nabla\parentheses{\frac{u_d}{P}}}^2\parentheses{\frac{P}{u_d}}^2\,dx,
\end{align*}
and therefore, from \eqref{Eq2 r=K/P},
\begin{equation*}
    \alpha\int_\Omega(u_d-K)\,dx=d\int_\Omega \module{\nabla\parentheses{\frac{u_d}{P}}}^2\parentheses{\frac{P}{u_d}}^2\,dx>0.
\end{equation*}
The last integral could only be zero if the  ratio $\frac{u_d}{P}$ were constant over $\Omega$, leading to $u_d \equiv K$ being a positive solution to \eqref{Eq1 r=K/P} which is also proportional to $P$.
Then,  the growth rate $r=\alpha\frac{K}{P}$ would be constant over $\Omega$, which we assume to be false.
\end{proof}

\begin{remark}\label{remark:Lou generalized}
In the case of $P=\alpha\frac{K}{r}$, it follows from Lemma \ref{Prop_1_DeAngelis} and from Corollary \ref{cor:linear_dependence} that $\int_\Omega u_d\,dx$ converges to $\int_\Omega K\,dx$ when both $d\to0^+$ and $d\to+\infty$. And since $\int_\Omega u_d\,dx>\int_\Omega K\,dx$ for every $d>0$, the total population of solutions to \eqref{Eq1 r=K/P} should be maximized for an intermediate diffusion coefficient $d^*\in(0,+\infty)$. However, multiple local maxima for $\int_\Omega u_d\,dx$ might exist, as illustrated in Example \ref{example:tot pop not unimodal}.

The assumption in (A) that $r$ is non-constant is quite significant: if $r$ is constant, the strategy chosen in Theorem~\ref{Lou_generalize} will lead to $P$ being proportional to $K$, leading to a solution identical to $K$ for any dispersal coefficient. And, as Theorem~\ref{th_const_growth_rate} below illustrates, this case maximizes all the possible population averages for a constant growth rate.

\end{remark}

Note that the result in \cite{Lou} is a particular case of Theorem~\ref{Lou_generalize} corresponding to $r$ being proportional to $K$ and $P\equiv1$. Then, the winning strategy is free non-directed dispersal. 
If the intrinsic growth rate is not totally aligned with the carrying capacity, a choice of more sophisticated strategies is required.

The second question is what happens if the growth rate in the heterogeneous environment is constant.

\begin{theorem}
\label{th_const_growth_rate}
    Let $r>0$ be constant and $K$, $P>0$ be non-constant and linearly independent over $\Omega$. Then the solution $u_d$ of \eqref{intro2} satisfies
    \begin{equation*}
        \int_\Omega u_d\,dx<\int_\Omega K\,dx,~~\textrm{ for all }~~d>0.
    \end{equation*}
\end{theorem}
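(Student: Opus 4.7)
The plan is to integrate the PDE against suitable test functions to reduce everything to a Cauchy--Schwarz argument, whose equality case must then be ruled out using linear independence of $K$ and $P$.

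First I would integrate \eqref{intro2} directly over $\Omega$, using the boundary condition $\partial_n(u_d/P) = 0$ and the divergence theorem to kill the diffusion term. Since $r$ is a positive constant, this yields the identity
\begin{equation*}
    \int_\Omega u_d\,dx = \int_\Omega \frac{u_d^2}{K}\,dx.
\end{equation*}
Then, applying Cauchy--Schwarz to the factorization $u_d = \sqrt{K}\cdot(u_d/\sqrt{K})$,
\begin{equation*}
    \parentheses{\int_\Omega u_d\,dx}^2 \le \int_\Omega K\,dx \cdot \int_\Omega \frac{u_d^2}{K}\,dx = \int_\Omega K\,dx \cdot \int_\Omega u_d\,dx,
\end{equation*}
which gives $\int_\Omega u_d\,dx \le \int_\Omega K\,dx$. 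So it only remains to exclude equality.

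The main obstacle, and the place where the hypothesis that $K$ and $P$ are linearly independent must be used, is ruling out equality. Equality in Cauchy--Schwarz above forces $u_d = cK$ pointwise for some constant $c>0$. Substituting $u_d = cK$ into \eqref{intro2} gives
\begin{equation*}
    cd\,\Delta\parentheses{\frac{K}{P}} + rcK(1-c) = 0,
\end{equation*}
and the boundary condition $\partial_n(u_d/P)=0$ becomes $c\,\partial_n(K/P)=0$, so $K/P$ satisfies a homogeneous Neumann condition. Integrating the last display over $\Omega$ and using the divergence theorem together with $\partial_n(K/P)=0$, the Laplacian term vanishes, and one obtains $r(1-c)\int_\Omega K\,dx = 0$, forcing $c=1$. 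But then $\Delta(K/P)=0$ in $\Omega$ with $\partial_n(K/P)=0$ on $\partial\Omega$, so $K/P$ is constant, contradicting the linear independence of $K$ and $P$. Therefore the inequality is strict, which is what we wanted.

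In short, the proof is a single integration trick plus Cauchy--Schwarz, and the only genuinely delicate point is the rigidity step that converts "$u_d$ proportional to $K$" into "$K$ proportional to $P$", which is precisely excluded by the hypothesis.
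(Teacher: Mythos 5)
Your proof is correct and follows essentially the same route as the paper: both integrate the equation to obtain $\int_\Omega u_d\,dx=\int_\Omega u_d^2/K\,dx$, deduce $\int_\Omega u_d\,dx\le\int_\Omega K\,dx$, and rule out equality by showing it forces $u_d$ proportional to $K$ and hence $\Delta(K/P)=0$ with Neumann data, contradicting linear independence. The only cosmetic difference is that you invoke Cauchy--Schwarz (and then pin down the constant $c=1$) where the paper argues by contradiction and expands $-\int_\Omega (u_d-K)^2/K\le 0$ directly; these are equivalent.
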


\begin{proof}
The proof literally repeats the argument in  \cite{Guo_2020} for $P \equiv 1$, but it is presented here for completeness.

    Assume the contrary that there exists some $d^*>0$ such that
    \begin{equation}\label{Eq.1 r constant}
        \int_\Omega u_{d^*}\,dx\geq\int_\Omega K\,dx.
    \end{equation}
    Integrating \eqref{intro2} for $d=d^*$, we get
    \begin{align}
        \nonumber&r\int_\Omega u_{d^*}\parentheses{1-\frac{u_{d^*}}{K}}\,dx=-d^*\int_\Omega\Delta\parentheses{\frac{u_{d^*}}{P}}\,dx=d^*\int_{\partial\Omega}\frac{\partial}{\partial n}\parentheses{\frac{u_{d^*}}{P}}\,dS=0\\
        \label{Eq.2 r constant}\implies&\int_\Omega u_{d^*}\parentheses{1-\frac{u_{d^*}}{K}}\,dx=0
    \end{align}
    from the boundary condition in \eqref{intro2}. Adding up \eqref{Eq.1 r constant} and \eqref{Eq.2 r constant}, we obtain
    \begin{equation*}
        0\leq\int_\Omega \parentheses{u_{d^*}- K+u_{d^*}\parentheses{1-\frac{u_{d^*}}{K}}}\,dx=\int_\Omega\parentheses{u_{d^*}-K}\parentheses{1-\frac{u_{d^*}}{K}}\,dx=-\int_\Omega\frac{1}{K}\parentheses{u_{d^*}-K}^2\,dx\leq0,
    \end{equation*}
    which leads to $\int_\Omega\frac{1}{K}\parentheses{u_{d^*}-K}^2\,dx=0$ and $u_{d^*}\equiv K$ over $\Omega$, which is only possible if $P\equiv K$ over $\Omega$, contradicting the assumption that $P$ and $K$ are linearly independent.

    Therefore, we must have $M(d)<\int_\Omega K\,dx$ for all $d>0$.
        
\end{proof}

\subsection{Tendencies for slow dispersal}

In this section, we consider solution asymptotics in the case $d \to 0^+$.

\begin{lemma}
 \label{Lemma 1}
 Suppose that (A) holds and let $u_d$ be a solution to \eqref{intro2}.
    \begin{enumerate}
        \item If $\displaystyle\int_\Omega\nabla\parentheses{\frac{K}{P}}\cdot\nabla \parentheses{\frac{1}{r}}\,dx<0$, then for $d>0$ small, 
        \begin{equation*}
            \int_\Omega u_d\,dx > \int_\Omega K\,dx.
        \end{equation*}

        \item If $\displaystyle\int_\Omega\nabla\parentheses{\frac{K}{P}}\cdot\nabla \parentheses{\frac{1}{r}}\,dx>0$ , then for $d>0$ small, 
        \begin{equation*}
            \int_\Omega u_d\,dx <\int_\Omega K\,dx.
        \end{equation*}

    \end{enumerate}
\end{lemma}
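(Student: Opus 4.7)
The plan is to convert the target comparison $\int_\Omega u_d\,dx$ vs.\ $\int_\Omega K\,dx$ into an exact integral identity whose leading-order behaviour in $d$ as $d \to 0^+$ is controlled by the hypothesis, using as inputs the $L^\infty$ bound $\|u_d - K\|_{L^\infty(\Omega)} = O(d)$ and the $L^2$ convergence $\nabla(u_d/P) \to \nabla(K/P)$ established in Lemma~\ref{Prop_1_DeAngelis}.

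First, I would multiply the stationary equation $d \Delta(u_d/P) + r u_d (1 - u_d/K) = 0$ by a test function $\phi$ and integrate over $\Omega$. Green's identity combined with the Neumann condition $\partial(u_d/P)/\partial n = 0$ on $\partial\Omega$ kills the boundary integral and gives
$$-d \int_\Omega \nabla\!\left(\frac{u_d}{P}\right) \cdot \nabla \phi\,dx + \int_\Omega r u_d \left(1 - \frac{u_d}{K}\right) \phi\,dx = 0.$$
Picking $\phi = 1/r$, so that $r$ cancels in the reaction term and $\nabla(1/r) = -\nabla r / r^2$ appears in the flux term, this rearranges to
$$\int_\Omega u_d \left(1 - \frac{u_d}{K}\right) dx = d \int_\Omega \nabla\!\left(\frac{u_d}{P}\right) \cdot \frac{\nabla r}{r^2}\,dx.$$

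Second, I would substitute $\varphi_d := u_d - K$ and use $u_d(1 - u_d/K) = -\varphi_d - \varphi_d^2/K$ to get
$$\int_\Omega \varphi_d\,dx = -\int_\Omega \frac{\varphi_d^2}{K}\,dx + d \int_\Omega \nabla\!\left(\frac{u_d}{P}\right) \cdot \frac{\nabla r}{r^2}\,dx.$$
By Lemma~\ref{Prop_1_DeAngelis}, $\|\varphi_d\|_{L^\infty} = O(d)$ makes the first right-hand term $O(d^2)$, and $\nabla(u_d/P) \to \nabla(K/P)$ in $L^2$ makes the second equal to $d \int_\Omega \nabla(K/P) \cdot \nabla r / r^2\,dx + o(d)$. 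Therefore
$$\int_\Omega u_d\,dx - \int_\Omega K\,dx = d \int_\Omega \nabla\!\left(\frac{K}{P}\right) \cdot \frac{\nabla r}{r^2}\,dx + o(d) \quad \text{as } d \to 0^+,$$
and the sign for small $d > 0$ is dictated by the sign of the weighted integral on the right.

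The principal obstacle I expect is reconciling the naturally arising weighted integral $\int_\Omega \nabla(K/P) \cdot \nabla r / r^2\,dx$ with the unweighted version $\int_\Omega \nabla(K/P) \cdot \nabla r\,dx$ in the hypothesis. Since $r$ is bounded away from zero on $\overline{\Omega}$ by assumption (A), the factor $1/r^2 > 0$ and the two integrals share a sign whenever $\nabla(K/P) \cdot \nabla r$ has constant sign on $\Omega$ (e.g.\ when $r$ is a monotone function of $K/P$). For the general statement I would alternatively test against $\phi = -r$, producing
$$\int_\Omega r^2 \varphi_d\,dx = -\int_\Omega \frac{r^2 \varphi_d^2}{K}\,dx - d \int_\Omega \nabla\!\left(\frac{u_d}{P}\right) \cdot \nabla r\,dx,$$
which ties the hypothesis directly to the sign of $\int_\Omega r^2 \varphi_d\,dx$; transferring that sign to $\int_\Omega \varphi_d\,dx$ itself is the delicate step, to be handled via the pointwise interior expansion $\varphi_d \approx d\,\Delta(K/P)/r$ together with a careful accounting of any boundary-layer correction.
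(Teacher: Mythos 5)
Your proposal follows essentially the same route as the paper's proof: test the equation against a function built from $1/r$, integrate by parts using the Neumann condition, and pass to the limit $d\to 0^+$ via Lemma~\ref{Prop_1_DeAngelis} to extract the leading coefficient of $d$ in $\int_\Omega(u_d-K)\,dx$. The paper multiplies by $\frac{K}{ru_d}$ so that the reaction term becomes exactly $K-u_d$, and then has to estimate the $u_d$-dependent gradient $\nabla\bigl(\frac{K}{ru_d}\bigr)$; your choice of the fixed test function $1/r$, followed by the expansion $u_d(1-u_d/K)=-\varphi_d-\varphi_d^2/K$ with $\int_\Omega\varphi_d^2/K\,dx=O(d^2)$, is a mild simplification, since the convergence of $\int_\Omega\nabla(u_d/P)\cdot\nabla(1/r)\,dx$ then follows from $W^{1,2}$ convergence and Cauchy--Schwarz alone. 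Note a sign slip: your first display should read $\int_\Omega u_d\bigl(1-\frac{u_d}{K}\bigr)dx=-d\int_\Omega\nabla\bigl(\frac{u_d}{P}\bigr)\cdot\frac{\nabla r}{r^2}\,dx$; your second display is the one consistent with the correct sign, and your final expansion $\int_\Omega u_d\,dx-\int_\Omega K\,dx= d\int_\Omega r^{-2}\nabla\bigl(\frac{K}{P}\bigr)\cdot\nabla r\,dx+o(d)$ agrees with what the paper obtains.

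The obstacle you flag at the end is real and you do not close it: the method determines the sign of the \emph{weighted} integral $\int_\Omega r^{-2}\nabla\bigl(\frac{K}{P}\bigr)\cdot\nabla r\,dx$, whereas the hypothesis concerns the unweighted one, and these need not share a sign when $\nabla\bigl(\frac{K}{P}\bigr)\cdot\nabla r$ changes sign on $\Omega$. Your alternative test $\phi=r$ does produce the unweighted integral, but only controls $\int_\Omega r^2\varphi_d\,dx$, and the pointwise expansion $\varphi_d\approx d\,\Delta\bigl(\frac{K}{P}\bigr)/r$ leads back to the same weighted quantity, so neither remedy resolves the mismatch. You should be aware that the paper itself dispatches this step with the chain $-\int_\Omega r^{-2}\nabla\bigl(\frac{K}{P}\bigr)\cdot\nabla r\,dx\le-\bigl(\max_{\bar\Omega}r^2\bigr)^{-1}\int_\Omega\nabla\bigl(\frac{K}{P}\bigr)\cdot\nabla r\,dx$, which is valid only when the integrand has constant sign; in all of the paper's applications (Corollary~\ref{corollary:K_equals_r}, Theorem~\ref{thm: correlation r and K/P}) $\nabla\bigl(\frac{K}{P}\bigr)\cdot\nabla r$ is indeed pointwise of one sign, so the downstream results stand, but as a proof of the lemma in the generality stated, your proposal is incomplete at exactly the point where the paper's own argument is weakest. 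To finish under the stated hypothesis you would either need to add the constant-sign assumption explicitly or restate the hypothesis in terms of the weighted integral.
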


\begin{proof}
        Let us prove item~1, as item 2 is proven analogously. Multiply the first equation in  \eqref{intro2} by $\frac{K}{ru_d}$,
        \begin{equation*}
            d\Delta\parentheses{\frac{u_d}{P}}\frac{K}{ru_d}+K-u_d=0,
        \end{equation*}
        integrate it and use that $\frac{\partial}{\partial n}(\frac{u_d}{P})=0$ on $\partial\Omega$ to obtain
        \begin{equation}\label{Eq.1 Feb14th}
            -d\int_\Omega\nabla\parentheses{\frac{u_d}{P}}\cdot\nabla\parentheses{\frac{K}{ru_d}}\,dx+\int_\Omega(K-u_d)\,dx=0.
        \end{equation}
        
        Next, let us prove that
        \begin{equation*}
            \int_\Omega\module{\nabla\parentheses{\frac{u_d}{P}}\cdot\nabla\parentheses{\frac{K}{ru_d}}-\nabla\parentheses{\frac{K}{P}}\cdot\nabla\parentheses{\frac{1}{r}}}\,dx\to0 \textrm{ as } d\to0^+. 
        \end{equation*}
        For that purpose, notice that 
        \begin{align*}
            &\int_\Omega\module{\nabla\parentheses{\frac{u_d}{P}}\cdot\nabla\parentheses{\frac{K}{ru_d}}-\nabla\parentheses{\frac{K}{P}}\cdot\nabla\parentheses{\frac{1}{r}}}\,dx\\
            \leq&\int_\Omega\module{\nabla\parentheses{\frac{u_d}{P}-\frac{K}{P}}\cdot\nabla\parentheses{\frac{K}{ru_d}} }\,dx+\int_\Omega\module{\nabla\parentheses{\frac{K}{P}}\cdot\nabla\parentheses{\frac{K}{ru_d} -\frac{1}{r}}  }\,dx\\
            \leq&\norm{\nabla\parentheses{\frac{u_d}{P}-\frac{K}{P}}}_{L^2(\Omega)}\norm{\nabla\parentheses{\frac{K}{ru_d}}}_{L^2(\Omega)}+\norm{\nabla\parentheses{\frac{K}{P}}}_{L^\infty(\Omega)}\norm{\nabla\parentheses{\frac{K}{ru_d}-\frac{1}{r}}}_{L^1(\Omega)}\\
            =&I+II.
        \end{align*}
    From Proposition \ref{Prop_1_DeAngelis} we get that $u_d\to K$ on $L^\infty(\Omega)\cap W^{1,2}(\Omega)$ as $d\to0$, therefore the first term of $I$ vanishes. Moreover,
    \begin{equation*}
        \nabla\parentheses{\frac{K}{ru_d}}=\nabla\parentheses{\frac{K}{r}}\frac{1}{u_d}-\frac{K}{ru_d^2}\nabla u_d,
    \end{equation*}
    which is bounded, as $u_d\to K$ implies that $\nabla u_d$ is bounded and $u_d$ is far from zero almost everywhere. Therefore, $I\to0$ when $d\to0$.

    To estimate $II$, notice that
	\begin{align}
		\nonumber&\norm{\nabla\parentheses{\frac{K}{r u_d}-\frac{1}{r}}}_{L^1(\Omega)}\\
		\nonumber=&\int_\Omega\module{\frac{\nabla K}{ru_d}-\frac{K\nabla r}{r^2u_d}-\frac{K}{ru_d^2}\nabla u_d+\frac{\nabla r}{r^2}}\,dx\\
		\nonumber\leq&\int_\Omega\module{\frac{\nabla K}{ru_d}-\frac{\nabla u_d}{ru_d}}\,dx+\int_\Omega\module{\frac{\nabla u_d}{ru_d}-\frac{K}{ru_d^2}\nabla u_d}\,dx+\int_\Omega \module{\frac{K\nabla r}{r^2 u_d}-\frac{\nabla r}{r^2}}\,dx\\
		=&\int_\Omega\module{\frac{1}{ru_d}}\module{\nabla K-\nabla u_d}\,dx+\int_\Omega\module{\frac{\nabla u_d}{ru_d}}\module{1-\frac{K}{u_d}}\,dx+\int_\Omega\module{\frac{\nabla r}{r^2}} \module{\frac{K}{u_d}-1}\,dx.\label{Eq.2 Prop principal}
	\end{align}	
According to Proposition \ref{Prop_1_DeAngelis}, as $d\to0^+$, $u_d\to K$ on both $L^\infty(\Omega)$ and $W^{1,2}(\Omega)$ norms, therefore,  $|\frac{1}{u_d}|$ and $|\nabla u_d|$ are bounded on $\Omega$,      and therefore, each integral on \eqref{Eq.2 Prop principal} vanishes as $d\to0^+$.
    

    Therefore, as $d\to0^+$,
    \begin{equation*}
        \int_\Omega\nabla\parentheses{\frac{u_d}{P}}\cdot\nabla\parentheses{\frac{K}{ru_d}}\,dx=\int_\Omega\nabla\parentheses{\frac{K}{P}}\cdot\nabla\parentheses{\frac{1}{r}}\,dx+h(d),
    \end{equation*}
    where $\lim_{d\to0^+}h(d)=0$. And from \eqref{Eq.1 Feb14th}, we get
    \begin{equation*}
        \int_\Omega(K-u_d)\,dx=d\parentheses{\int_\Omega\nabla\parentheses{\frac{K}{P}}\cdot\nabla\parentheses{\frac{1}{r}}\,dx+h(d)}
    \end{equation*}
    which is negative for small enough $d>0$, since $\int_\Omega\nabla\parentheses{\frac{K}{P}}\cdot\nabla\parentheses{\frac{1}{r}}\,dx<0$.

\end{proof}


\begin{definition}
    We say that two functions $f:\Omega\to\R$ and $g:\Omega\to\R$ are {\em correlated by a function $h:\R\to\R$} in $\Omega$ if we have
    \begin{equation}\label{def correlations Eq1}
        f(x)=h(g(x)) \mbox{ ~~ for  all ~~ } x\in\Omega.
    \end{equation}
    If \eqref{def correlations Eq1} is satisfied in $\Omega$ and $h$ is non-decreasing, we say that $f$ and $g$ are {\em positively correlated} in $\Omega$, and if $h$ is non-increasing, we say that $f$ and $g$ are {\em negatively correlated} in $\Omega$.
\end{definition}

    \begin{corollary} 
\label{corollary:K_equals_r} 
    Suppose $r\equiv K$ over $\Omega$, and $P$ and $K$ are correlated by a function $h:(0,+\infty)\to(0,+\infty)$, and we write $P(x)=h(K(x))$ in $\Omega$. 
    \begin{enumerate}
        \item If $h'(t)>\frac{h(t)}{t}$ for all $t>0$, then for $d>0$ small,
        \begin{equation*}
            \int_\Omega u_d\,dx<\int_\Omega K\,dx.
        \end{equation*}

        \item If $h'(t)<\frac{h(t)}{t}$ for all $t>0$, then for $d>0$ small,
        \begin{equation*}
            \int_\Omega u_d\,dx>\int_\Omega K\,dx.
        \end{equation*}
    \end{enumerate}
\end{corollary}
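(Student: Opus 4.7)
My plan is to reduce this to a direct application of Lemma~\ref{Lemma 1} by computing the sign of $\int_\Omega \nabla(K/P)\cdot\nabla r\,dx$ under the additional structure $r\equiv K$ and $P=h(K)$. Since $\nabla r=\nabla K$, the integrand becomes $\nabla(K/P)\cdot\nabla K$. The natural move is to introduce the single-variable function $g(t)=t/h(t)$ so that $K/P=g(K)$; by the chain rule $\nabla(K/P)=g'(K)\,\nabla K$, whence
\[
\int_\Omega \nabla(K/P)\cdot\nabla r\,dx \;=\; \int_\Omega g'(K)\,|\nabla K|^2\,dx,
\]
so the sign of this integral is controlled pointwise by the sign of $g'(K)$.

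A routine differentiation gives $g'(t)=\bigl(h(t)-t\,h'(t)\bigr)/h(t)^2$, whose sign is opposite to that of $h'(t)-h(t)/t$. In case~(1) the hypothesis $h'(t)>h(t)/t$ yields $g'(K(x))<0$ everywhere on $\overline{\Omega}$, and in case~(2) the reverse inequality gives $g'(K(x))>0$. Assumption~(A) forces $K$ to be non-constant on the connected open set $\Omega$, and since $K\in C^2(\overline{\Omega})$ the continuous function $|\nabla K|^2$ is not identically zero and is therefore strictly positive on an open set of positive measure. Consequently the integral above is strictly negative in case~(1) and strictly positive in case~(2). Applying items~2 and~1 of Lemma~\ref{Lemma 1} respectively then yields the claimed inequalities for $d>0$ small.

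I do not anticipate any serious analytic obstacle: the corollary is essentially a bookkeeping consequence of Lemma~\ref{Lemma 1} once $K/P$ is recognized as a composition with $K$. The only mild point to verify is that $g$ is well defined and continuously differentiable on the range of $K$, which follows from the positivity of $h$ and the differentiability implicit in the hypothesis $h'(t)>h(t)/t$, combined with $K\in C^2(\overline{\Omega})$.
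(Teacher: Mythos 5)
Your proposal is correct and follows essentially the same route as the paper's proof: both reduce the corollary to the sign of $\int_\Omega \nabla(K/P)\cdot\nabla K\,dx$ and show this integrand equals $\bigl(h(K)-Kh'(K)\bigr)|\nabla K|^2/h(K)^2$ before invoking Lemma~\ref{Lemma 1}; whether one differentiates $g(t)=t/h(t)$ first or applies the quotient rule to $K/P$ and substitutes $\nabla P=h'(K)\nabla K$ is a cosmetic difference. Your explicit remark that non-constancy of $K$ makes $|\nabla K|^2$ nonvanishing on a set of positive measure, ensuring strict inequality of the integral, is a small point the paper leaves implicit.
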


\begin{proof}
    \begin{enumerate}
        \item By Lemma \ref{Lemma 1}, it is enough to determine the sign of 
        \begin{equation*}
            \int_\Omega \nabla\parentheses{\frac{K}{P}}\cdot\nabla \parentheses{\frac{1}{K}}\,dx=\int_\Omega\parentheses{-\frac{|\nabla K|^2}{PK^2}+K\frac{\nabla P\cdot\nabla K}{P^2K^2}}\,dx.
        \end{equation*}
        If $P(x)=h(K(x))$, then
        \begin{equation*}
            \nabla P=h'(K(x))\nabla K(x),
        \end{equation*}
        and we obtain
        \begin{equation*}
            \int_\Omega \nabla\parentheses{\frac{K}{P}}\cdot\nabla \parentheses{\frac{1}{K}}\,dx=\int_\Omega\parentheses{-P+Kh'(K)}\frac{|\nabla K|^2}{P^2K^2}\,dx=\int_\Omega\parentheses{-h(K)+Kh'(K)}\frac{|\nabla K|^2}{P^2K^2}\,dx.
        \end{equation*}
        If $h'(t)>\frac{h(t)}{t}$ for all $t>0$, then $-h(t)+th'(t)>0$ for all $t>0$, and $\int_\Omega \nabla\parentheses{\frac{K}{P}}\cdot\nabla \parentheses{\frac{1}{K}}\,dx>0$, which implies, by Lemma \ref{Lemma 1}, that
        \begin{equation*}
            \int_\Omega u_d\,dx<\int_\Omega K\,dx.
        \end{equation*}
        
        And if $h'(t)<\frac{h(t)}{t}$ for all $t>0$, then $-h(t)+th'(t)<0$, implying $\int_\Omega \nabla\parentheses{\frac{K}{P}}\cdot\nabla \parentheses{\frac{1}{K}}\,dx<0$, and follows from Lemma \ref{Lemma 1} that
        \begin{equation*}
            \int_\Omega u_d\,dx>\int_\Omega K\,dx.
        \end{equation*}
        
    \end{enumerate}
\end{proof}

A sufficient condition for Lemma \ref{Lemma 1} is when there exists a positive or negative correlation between $r$ and $\frac{K}{P}$, a fact we prove next.

\begin{theorem}\label{thm: correlation r and K/P}
    Assume assumption (A) holds. If $r$ and $\frac{K}{P}$ are positively correlated in $\Omega$, then, for $d>0$ small,
    \begin{equation*}
    	\int_\Omega u_d\,dx>\int_\Omega K\,dx.
    \end{equation*}
    
    If $r$ and $\frac{K}{P}$ are negatively correlated in $\Omega$, we get, for $d>0$ small,
    \begin{equation*}
    	\int_\Omega u_d\,dx<\int_\Omega K\,dx.
    \end{equation*}
     
\end{theorem}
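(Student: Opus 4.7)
The plan is to reduce this to Lemma~\ref{Lemma 1} by analyzing the sign of the integral $\int_\Omega \nabla(K/P)\cdot\nabla r\,dx$ under the correlation hypothesis. Writing $r(x)=h(K(x)/P(x))$ for some non-decreasing $h:(0,+\infty)\to(0,+\infty)$ (the positively correlated case), the chain rule applied to $r$ as the composition of the $C^2$ function $K/P$ with $h$ gives $\nabla r = h'(K/P)\,\nabla(K/P)$, so that
\[
\nabla\!\left(\frac{K}{P}\right)\cdot\nabla r \;=\; h'\!\left(\frac{K}{P}\right)\left|\nabla\!\left(\frac{K}{P}\right)\right|^2 \;\ge\; 0
\]
pointwise on $\Omega$.

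Next I would integrate this pointwise inequality and verify that the integral is \emph{strictly} positive. Since $r$ is non-constant by assumption (A) while $r=h(K/P)$, neither can $K/P$ be constant on $\Omega$ nor can $h$ be constant on the range of $K/P$. Thus there is an open set $U\subset\Omega$ on which $|\nabla(K/P)|>0$, and on the $(K/P)$-image of $U$ the non-decreasing function $h$ is non-constant, so one obtains a subset of $U$ of positive measure on which $h'(K/P)>0$. This makes $\int_\Omega\nabla(K/P)\cdot\nabla r\,dx$ strictly positive, which is exactly the hypothesis of item~1 of Lemma~\ref{Lemma 1}, and we conclude $\int_\Omega u_d\,dx>\int_\Omega K\,dx$ for all sufficiently small $d>0$.

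For the negatively correlated case I would run the identical argument with $h$ non-increasing: then $h'(K/P)\le 0$ pointwise, the integral is strictly negative by the same reasoning, and item~2 of Lemma~\ref{Lemma 1} yields $\int_\Omega u_d\,dx<\int_\Omega K\,dx$ for small $d>0$.

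The main technical obstacle is upgrading the pointwise non-strict inequality to strict positivity of the integral, since \emph{correlated} merely requires $h$ to be monotone and could in principle allow $h$ to be flat on subintervals of the range of $K/P$. If one tacitly assumes $h\in C^1$ with $h'$ of definite strict sign on that range, the bound is immediate; otherwise one has to exploit the fact that a flat plateau of $h$ combined with the non-constancy of $r$ forces $K/P$ to avoid that plateau except on a set of measure zero, so that the contribution $h'(K/P)|\nabla(K/P)|^2$ is still strictly positive on a set of positive Lebesgue measure in $\Omega$.
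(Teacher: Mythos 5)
Your proposal follows the paper's proof essentially verbatim: write $r=h(K/P)$, apply the chain rule to obtain $\nabla(K/P)\cdot\nabla r=h'(K/P)\left|\nabla(K/P)\right|^2\ge 0$ pointwise, argue that non-constancy of $r$ under assumption (A) excludes the degenerate cases so the integral $\int_\Omega\nabla(K/P)\cdot\nabla r\,dx$ is strictly positive (resp.\ negative), and then invoke Lemma~\ref{Lemma 1}. Your closing discussion of possible plateaus of $h$ is in fact a more careful treatment of the strictness step than the paper's one-line dismissal, but the overall argument is the same.
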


\begin{proof}
If $r$ and $\frac{K}{P}$ are positively correlated (similar proof for negative correlation),  there exists $h:\R\to\R$ non-decreasing with $r(x)=h\parentheses{\frac{K(x)}{P(x)}}$ for every $x\in\Omega$, thus
    \begin{equation*}
        \nabla \parentheses{\frac{1}{r}}=-h\parentheses{\frac{K}{P}}^{-2}h'\parentheses{\frac{K}{P}}\nabla\parentheses{\frac{K}{P}}
    \end{equation*}
    and
    \begin{equation*}
        \nabla\parentheses{\frac{K}{P}}\cdot\nabla \parentheses{\frac{1}{r}}=-\frac{h'\parentheses{K/P}}{h\parentheses{K/P}^2}\module{\nabla\parentheses{\frac{K}{P}}}^2\leq0\,\textrm{ everywhere in }\Omega.
    \end{equation*}
    And since $h'(x)\equiv0$ or $\frac{K}{P}\equiv$ cte in $\Omega$ means that $r$ is constant, we obtain the strict inequality 
   \begin{equation*}
   \int_\Omega\nabla\parentheses{\frac{K}{P}}\cdot\nabla \parentheses{\frac{1}{r}}\,dx<0.
   \end{equation*}   
   The proof is concluded by applying Lemma \ref{Lemma 1}.
\end{proof}

\begin{remark}\label{rmk: tot pop maximized}
    When $\displaystyle\int_\Omega\nabla\parentheses{\frac{K}{P}}\cdot\nabla \parentheses{\frac{1}{r}}\,dx<0$, we can only be sure that $M(d)=\displaystyle\int_\Omega u_d\,dx$ is maximized at some intermediate dispersion coefficient $d^*\in(0,+\infty)$ when $M(0)\geq M(+\infty)$, or equivalently,
    \begin{equation*}
        \int_\Omega K\,dx\geq\frac{\int_\Omega rP\,dx}{\int_\Omega\frac{r}{K}P^2\,dx}\int_\Omega P\,dx.
    \end{equation*}
    If $M(0)<M(+\infty)$, it is possible for $M(d)$ to be always increasing over $(0,+\infty)$, as illustrated on Example \ref{example:tot pop always increasing}.  
\end{remark}

Some additional relations of weighted averages are presented in the Appendix.

\subsection{The case when the correlation is described by a power  function}

Based on Theorem \ref{Lou_generalize}, it is reasonable to question if solutions to \eqref{intro2} satisfy the estimate \eqref{intro1a} when, instead of $r=\alpha\frac{K}{P}$, we have
\begin{equation}\label{eq. r=(K/P)lambda}
    r=\alpha\parentheses{\frac{K}{P}}^\lambda~~~\textrm{in}~~~\Omega,
\end{equation}
where $\alpha\in(0,+\infty)$ and $\lambda\in\R$. If $\lambda>0$, $r$ and $\frac{K}{P}$ are correlated by the function $h(x)=\alpha x^\lambda$, which is increasing in $(0,+\infty)$, and we can apply Theorem \ref{thm: correlation r and K/P} to obtain an estimate \eqref{intro1a} for small values of $d>0$. 

In this section, let us denote by $u_{d,\lambda}$ the positive solution to
\begin{equation}\label{eq. prob r=(K/P)lambda}
    \begin{cases}
        d\Delta \parentheses{\frac{u(x)}{P(x)}} +\alpha\parentheses{\frac{K(x)}{P(x)}}^\lambda u (x) \parentheses{1-\frac{u(x)}{K(x)}}=0, &  x \in \Omega,\\
        \frac{\partial u}{\partial n}(x)=0 , &  x \in \partial\Omega,
    \end{cases} 
\end{equation}
for each $\lambda\in\R$ and $d>0$, and the total population function
\begin{equation*}
    M_\lambda(d)=\int_\Omega u_{d,\lambda}\,dx.
\end{equation*}

To analyze the total population of $u_{d,\lambda}$ as $d$ increases, let us follow the strategy of Remark \ref{rmk: tot pop maximized} and analyze the relation between $M_\lambda(0)$ and $M_\lambda(+\infty)$. 

\begin{lemma}
    Let $f$, $g:\Omega\to\R$ be positive functions. If $f$ is non-constant and $\int_\Omega g\,dx=1$, it holds
    \begin{equation}\label{Eq.2 M(infty) increasing}
        \frac{\int_\Omega gf\ln f\,dx}{\int_\Omega gf\,dx}>\int_\Omega g\ln f\,dx.
    \end{equation}
\end{lemma}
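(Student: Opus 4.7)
The plan is to recognize the inequality as a composition of two standard Jensen-type estimates with $\ln \bar f$ inserted as an intermediate term, where $\bar f := \int_\Omega gf\,dx$. Since $g>0$ and $\int_\Omega g\,dx=1$, the measure $d\mu := g\,dx$ is a probability measure on $\Omega$, and in this notation the inequality \eqref{Eq.2 M(infty) increasing} to be proved reads
$$\frac{\int_\Omega f\ln f\,d\mu}{\int_\Omega f\,d\mu} > \int_\Omega \ln f\,d\mu.$$

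First, I would apply Jensen's inequality with the strictly convex function $\phi(t)=t\ln t$ on $(0,+\infty)$ to get $\int_\Omega f\ln f\,d\mu \ge \bar f\,\ln\bar f$, and then divide through by $\bar f>0$ to obtain
$$\frac{\int_\Omega f\ln f\,d\mu}{\bar f} \ge \ln\bar f.$$
Next, I would apply Jensen's inequality in the opposite direction to the strictly concave function $\ln$ on $(0,+\infty)$:
$$\int_\Omega \ln f\,d\mu \le \ln\bar f.$$
Chaining the two estimates through $\ln\bar f$ immediately yields the inequality in its non-strict form.

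The only delicate point is the strictness, which is where the assumption that $f$ is non-constant enters. Because $\phi(t)=t\ln t$ is \emph{strictly} convex and $\ln$ is \emph{strictly} concave on $(0,+\infty)$, the equality cases in both Jensen estimates above occur only when $f$ is constant $\mu$-a.e.\ on $\Omega$. Since $g>0$ on $\Omega$, this would force $f$ to be constant on $\Omega$ up to a Lebesgue-null set, contradicting the hypothesis. Hence at least one of the two inequalities is strict, and the desired strict inequality follows. The main obstacle is really no obstacle at all: the proof is essentially a bookkeeping exercise combining two well-known convexity estimates, and the only nontrivial verification is that the non-constancy of $f$ together with the positivity of $g$ excludes the equality case of Jensen.
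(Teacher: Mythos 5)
Your proof is correct, but it takes a genuinely different route from the paper's. The paper establishes the identity
\begin{equation*}
\int_\Omega gf\ln f\,dx-\int_\Omega g\ln f\,dx\int_\Omega gf\,dx=\int_\Omega\left(f(x)-L\right)\left(\ln f(x)-\ln L\right)g(x)\,dx,\qquad L=\int_\Omega gf\,dx,
\end{equation*}
and observes that the integrand on the right is pointwise nonnegative because $f-L$ and $\ln f-\ln L$ always share the same sign (monotonicity of $\ln$); this is a covariance, or Chebyshev-type, argument exploiting the comonotonicity of $f$ and $\ln f$, and the equality case ($f\equiv L$) is read off directly from the identity. You instead chain two Jensen inequalities through the intermediate quantity $\ln\bar f$: strict convexity of $t\mapsto t\ln t$ gives $\int_\Omega f\ln f\,d\mu\ge\bar f\ln\bar f$, and strict concavity of $\ln$ gives $\int_\Omega\ln f\,d\mu\le\ln\bar f$, with strictness in at least one step because $f$ is not $\mu$-a.e.\ constant. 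Both arguments are elementary and both only use positivity of $g$, normalization of $\mu=g\,dx$, and non-constancy of $f$; the paper's version is self-contained in one algebraic identity, while yours outsources the work to the standard equality case of Jensen but in exchange exhibits the inequality as the statement that the weighted logarithmic mean $\int_\Omega f\ln f\,d\mu/\bar f$ dominates $\ln\bar f$, which in turn dominates $\int_\Omega\ln f\,d\mu$. The only point worth tightening is the final contradiction: ``constant up to a Lebesgue-null set'' contradicts ``non-constant'' only because, in the paper's application, $f=K/P$ is continuous; for a general measurable $f$ the hypothesis would have to be read as ``non-constant a.e.'' (the paper's own proof has the same implicit assumption).
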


\begin{proof}
    Notice that
    \begin{align*}
        &\int_\Omega g(x)f(x)\ln f(x)\,dx-\int_\Omega g(x)\ln f(x)\,dx\int_\Omega g(y)f(y)\,dy\\
        =&\int_\Omega\parentheses{f(x)-\int_\Omega g(y)f(y)\,dy}\parentheses{\ln f(x)-\ln\parentheses{\int_\Omega g(y)f(y)\,dy}}g(x)\,dx,
    \end{align*}
    since
    \begin{align*}
        &\int_\Omega \parentheses{f(x)-\int_\Omega g(y)f(y)\,dy}\ln\parentheses{\int_\Omega g(y)f(y)\,dy}g(x)\,dx=0,
    \end{align*}
    where we recall that $\int_\Omega g(x)\,dx=1$. Therefore, denoting $L=\int_\Omega g(y)f(y)\,dy$, we obtain
    \begin{align*}
        \int_\Omega gf\ln f\,dx-\int_\Omega g\ln f\,dx\int_\Omega gf\,dy=\int_\Omega\parentheses{f(x)-L}\parentheses{\ln f(x)-\ln L}g(x)\,dx.
    \end{align*}
    We notice that this integral is non-negative, since $g(x)>0$, and $\ln y$ being increasing on $(0,+\infty)$ implies that $f(x)-L$ and $\ln f(x)-\ln L$ always have the same signal over $\Omega$, and therefore,
    \begin{equation*}
        \parentheses{f(x)-L}\parentheses{\ln f(x)-\ln L}\geq0~~~\textrm{for every}~~~x\in\Omega.
    \end{equation*}
    Finally, $\int_\Omega\parentheses{f(x)-L}\parentheses{\ln f(x)-\ln L}g(x)\,dx=0$ if, and only if, $f(x)=L$ for all $x\in\Omega$, which contradicts $f$ being non-constant. 
\end{proof}

\begin{theorem}\label{thm: M(infty) for r=(K/P)^beta}
    The mapping $\lambda\mapsto M_\lambda(+\infty)=\frac{\int_\Omega \frac{K^\lambda}{P^{\lambda-1}}\,dx}{\int_\Omega \frac{K^{\lambda-1}}{P^{\lambda-2}}\,dx}\int_\Omega P\,dx$ is a strictly increasing function of $\lambda$ in $\R$.
\end{theorem}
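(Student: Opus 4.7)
The plan is to rewrite $M_\lambda(+\infty)$ so that its $\lambda$-dependence is exposed as a ratio of moments of a single positive function against a fixed measure, then apply the preceding lemma to the logarithmic derivative in $\lambda$.

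First, I would set $f := K/P$ on $\Omega$ (strictly positive on $\bar\Omega$) and observe the algebraic identities $K^\lambda P^{1-\lambda} = f^\lambda \cdot P$ and $K^{\lambda-1} P^{2-\lambda} = f^{\lambda-1} \cdot P$. This converts the formula from Lemma~\ref{Prop 2 DeAngelis} into
\begin{equation*}
M_\lambda(+\infty) \;=\; \left(\int_\Omega P\,dx\right) \cdot \frac{\int_\Omega f^\lambda P\,dx}{\int_\Omega f^{\lambda-1} P\,dx},
\end{equation*}
reducing the question to strict monotonicity of a single ratio of moments of $f$ against the fixed measure $d\mu := P\,dx$.

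Next, I would differentiate $\phi(\lambda) := \ln M_\lambda(+\infty)$ in $\lambda$. Using $\partial_\lambda f^\lambda = f^\lambda \ln f$, this gives
\begin{equation*}
\phi'(\lambda) \;=\; \frac{\int_\Omega f^\lambda \ln f \cdot P\,dx}{\int_\Omega f^\lambda P\,dx} \;-\; \frac{\int_\Omega f^{\lambda-1} \ln f \cdot P\,dx}{\int_\Omega f^{\lambda-1} P\,dx}.
\end{equation*}
Now I introduce the auxiliary probability density
\begin{equation*}
g(x) \;:=\; \frac{f(x)^{\lambda-1} P(x)}{\int_\Omega f^{\lambda-1} P\,dx},
\end{equation*}
which is strictly positive on $\bar\Omega$ and satisfies $\int_\Omega g\,dx = 1$. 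With this choice, the two terms in $\phi'(\lambda)$ become precisely $\int_\Omega gf\ln f\,dx / \int_\Omega gf\,dx$ and $\int_\Omega g\ln f\,dx$, respectively, so the preceding lemma delivers $\phi'(\lambda) > 0$ directly.

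The two things to check are: (i) $f = K/P$ must be non-constant so the lemma's strict inequality applies, which is forced by assumption (A), since constancy of $f$ would make $r = \alpha f^\lambda$ constant; and (ii) differentiation under the integral sign must be justified, which is routine because $K, P \in C^2(\bar\Omega)$ are bounded away from $0$ and $\infty$ on the compact set $\bar\Omega$, making $f^\lambda$ and $f^\lambda \ln f$ locally uniformly bounded in $\lambda$ so dominated convergence applies. I do not expect any serious obstacle: the preceding lemma is essentially tailor-made for this application and carries all of the analytic weight, while the rewriting in terms of $f = K/P$ and $d\mu = P\,dx$ is the single nontrivial bookkeeping step.
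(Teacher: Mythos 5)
Your proposal is correct and follows essentially the same route as the paper: both reduce to the ratio $F(\lambda)=\int_\Omega f^{\lambda}P\,dx/\int_\Omega f^{\lambda-1}P\,dx$ with $f=K/P$ (the paper writes the integrands as $K(K/P)^{\lambda-1}$ and $K(K/P)^{\lambda-2}$, which are the same functions), take the logarithmic derivative in $\lambda$, and apply the preceding lemma with the identical normalized weight $g=f^{\lambda-1}P/\int_\Omega f^{\lambda-1}P\,dx$. Your remarks on non-constancy of $f$ and on differentiating under the integral sign are correct side conditions that the paper leaves implicit.
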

\begin{proof}
    It is enough to show that
    \begin{equation}\label{Eq.0 M(infty) increasing}
        F(\lambda)=\frac{\int_\Omega \frac{K^\lambda}{P^{\lambda-1}}\,dx}{\int_\Omega \frac{K^{\lambda-1}}{P^{\lambda-2}}\,dx}=\frac{\int_\Omega K\parentheses{\frac{K}{P}}^{\lambda-1}\,dx}{\int_\Omega K\parentheses{\frac{K}{P}}^{\lambda-2}\,dx}
    \end{equation}
    is strictly increasing in $\R$. Notice that
    \begin{equation*}
        \ln(F(\lambda))=\ln\parentheses{\int_\Omega K\parentheses{\frac{K}{P}}^{\lambda-1}\,dx}-\ln\parentheses{\int_\Omega K\parentheses{\frac{K}{P}}^{\lambda-2}\,dx},
    \end{equation*}
    and differentiating with respect to $\lambda$, 
    \begin{equation}\label{Eq.1 M(infty) increasing}
        \frac{F'(\lambda)}{F(\lambda)}=\frac{\int_\Omega K\parentheses{\frac{K}{P}}^{\lambda-1}\ln\parentheses{\frac{K}{P}}\,dx}{\int_\Omega K\parentheses{\frac{K}{P}}^{\lambda-1}\,dx}-\frac{\int_\Omega K\parentheses{\frac{K}{P}}^{\lambda-2}\ln\parentheses{\frac{K}{P}}\,dx}{\int_\Omega K\parentheses{\frac{K}{P}}^{\lambda-2}\,dx}.
    \end{equation}
    The right-hand side of Eq. \eqref{Eq.1 M(infty) increasing} is positive, from \eqref{Eq.2 M(infty) increasing} applied to 
    \begin{equation*}
       f=\frac{K}{P},~~~~g=\parentheses{\int_\Omega K\parentheses{\frac{K}{P}}^{\lambda-2}\,dx}^{-1}K\parentheses{\frac{K}{P}}^{\lambda-2} 
    \end{equation*}
     And since $F(\lambda)>0$, we obtain from \eqref{Eq.1 M(infty) increasing} that $F'(\lambda)>0$ in $\R$. Therefore, $F(\lambda)$ is increasing in $\R$.
\end{proof}

Theorem \ref{thm: M(infty) for r=(K/P)^beta} and the fact that  $M_\lambda(0)=M_\lambda(+\infty)$ for $\lambda=1$ implies that
\begin{equation*}
    M_\lambda(0)>M_\lambda(+\infty) \textrm{ for } \lambda>1
\end{equation*}
and 
\begin{equation*}
    M_\lambda(0)<M_\lambda(+\infty) \textrm{ for } \lambda<1.
\end{equation*}
These facts are elucidated in Example \ref{example:r(K/P)^lambda}, and we explore their implications for our understanding of the behaviour of $M_\lambda(d)$ in the Discussion section.

\section{Examples}
\label{sec:examples}

In this section, we illustrate some properties of the total population function $M(d)$ that were mentioned on Section \ref{sec:main}. Note that, in this Section, all simulations were performed on the one-dimensional domain $\Omega=(0,1)$.

\begin{example}\label{example:tot pop always increasing}
    This example explores the possible behaviours of $M(d)$ when $\int_\Omega \nabla\parentheses{\frac{K}{P}}\cdot \nabla \parentheses{\frac{1}{r}}\,dx<0$. We know from Lemma \ref{Lemma 1} that $M(d)$ is increasing near $d=0$, and from Remark \ref{rmk: tot pop maximized}, a global maximum is attained at $d^*\in(0,+\infty)$ if $M(0)>M(+\infty)$. This scenario is illustrated in Figure \ref{Fig tot pop increasing}a, which was generated using the parameters $K(x)=2+\cos(\pi x)$, $P(x)=1+\frac{1}{5}\cos(\pi x)$ and $r(x)=\frac{5}{4}+\frac{1}{4}\cos(\pi x)$.
    
    It is also possible for $M(d)$ to be strictly increasing in $(0,+\infty)$, without attaining a local maximum, when $M(0)<M(+\infty)$. For instance, considering $K(x)=2+\cos(\pi x)$, $P(x)=1+\frac{1}{5}\cos(\pi x)$ and $r(x)=\exp(4\cos(\pi x))$,  the total population increases monotonically with $d$, as shown in Figure \ref{Fig tot pop increasing}b.

    \begin{figure}[http]
   \centering
   \includegraphics[scale=0.5]{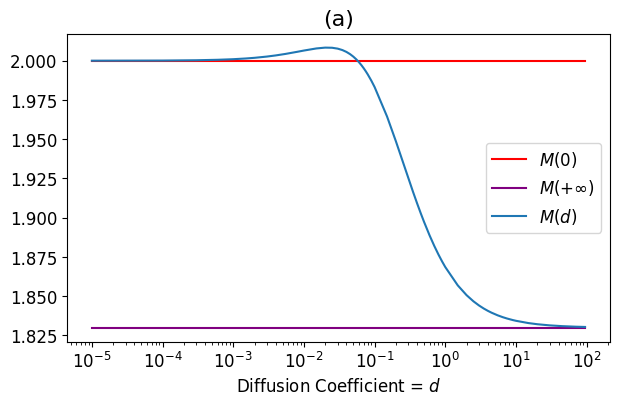}
   \includegraphics[scale=0.5]{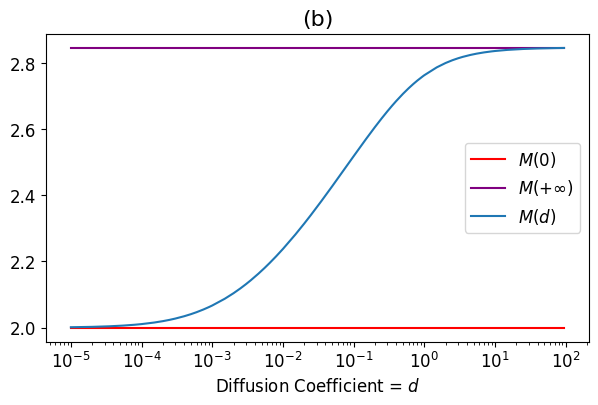}
   \caption{Different behaviours for the total population when $\int_\Omega \nabla\parentheses{\frac{K}{P}}\cdot \nabla r\,dx>0$, for $K(x)=2+\cos(\pi x)$, $P(x)=1+\frac{1}{5}\cos(\pi x)$ and (a) $r(x)=\frac{5}{4}+\frac{1}{4}\cos(\pi x)$, (b)  $r(x)=\exp(4\cos(\pi x))$.}
   \label{Fig tot pop increasing}
\end{figure}

\end{example}
 
\begin{example}\label{example:tot pop not unimodal}
Returning to the case where $P$ and $\frac{K}{r}$ are proportional, recall from Remark \ref{remark:Lou generalized} that $M(d)$ exceeds $\int_\Omega K\,dx$ for all $d>0$, with $M(d)$ approaching this integral as $d\to0^+$ and $d\to+\infty$, implying that $M(d)$ attains a maximum value at $d^*\in(0,+\infty)$. 

We now illustrate the local maximum value of $M(d)$ my not be unique. Let $K(x)=0.1+\cos(\pi x)+5\cos^2(\pi x)-2\cos^3(\pi x)$, $P(x)=1.5-3\cos(\pi x)+\cos^2(\pi x)+3\cos^6(\pi x)$ and $r(x)=\frac{K(x)}{P(x)}$. As shown in Figure \ref{Figure: not unimodal tot pop}, the corresponding total population $M(d)$ exhibits two different local maxima.

    \begin{figure}[h]
   \centering
   \includegraphics[scale=0.5]{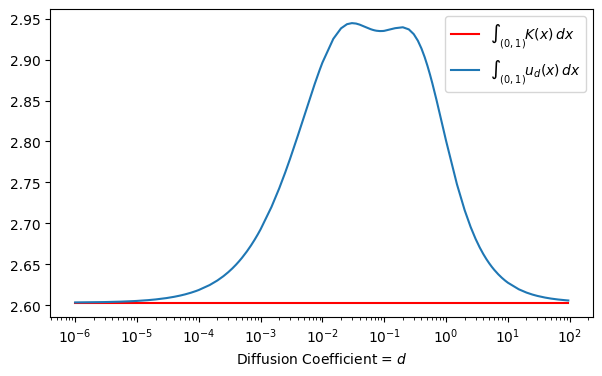}
   \caption{Non-unimodal total population curve under spatially heterogeneous dispersal strategy, for $K(x)=0.1+\cos(\pi x)+5\cos^2(\pi x)-2\cos^3(\pi x)$, $P(x)=1.5-3\cos(\pi x)+\cos^2(\pi x)+3\cos^6(\pi x)$ and $r(x)=\frac{K(x)}{P(x)}$.}
   \label{Figure: not unimodal tot pop}
\end{figure}

Investigations are being conducted to examine the necessary conditions for the parameters $P$ and $K$ for which $M(d)$ changes monotonicity exactly once and attains one unique maximum value.
\end{example}

The last example illustrates the result of Theorem \ref{thm: M(infty) for r=(K/P)^beta}, which analyzes how the total population depends on the parameter $\lambda$ present in the growth rate
$r(x)=\parentheses{\frac{K(x)}{P(x)}}^\lambda$.

\begin{example}
\label{example:r(K/P)^lambda}
Let $K(x)=2+\cos(\pi x)$, $P(x)=2-\cos(2\pi x)$, and $r(x)=\parentheses{\frac{K(x)}{P(x)}}^\lambda$. The parameter $\lambda$ is varied, taking the values $\lambda=-1$, $0$, $0.5$, $1$, $1.4$ and $2.3$ for comparison.
Figure \ref{Fig r=(K/P)^lambda} shows the corresponding total population curves $M_\lambda(d)$.

\begin{figure}[http!]     
   \centering
   \includegraphics[width=0.12\textwidth]{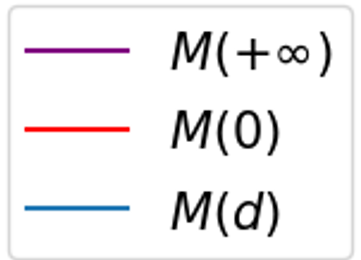}
   
   \includegraphics[width=0.45\textwidth]{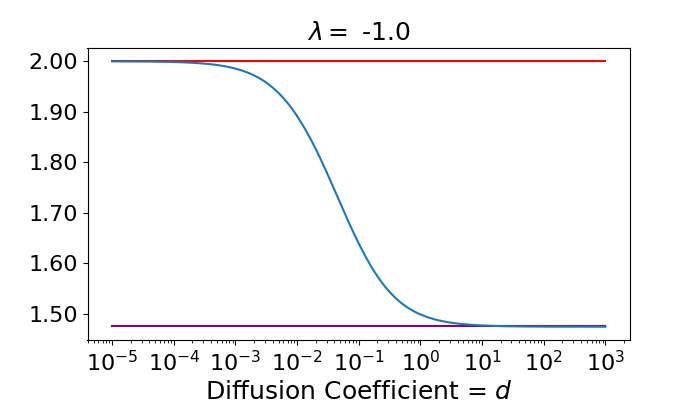}
   \includegraphics[width=0.45\textwidth]{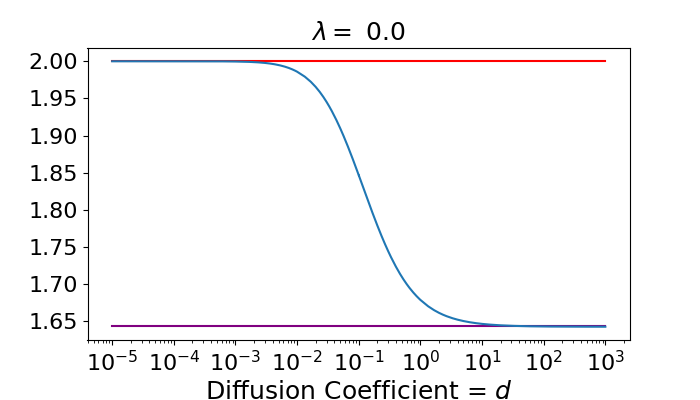}
   
   \includegraphics[width=0.45\textwidth]{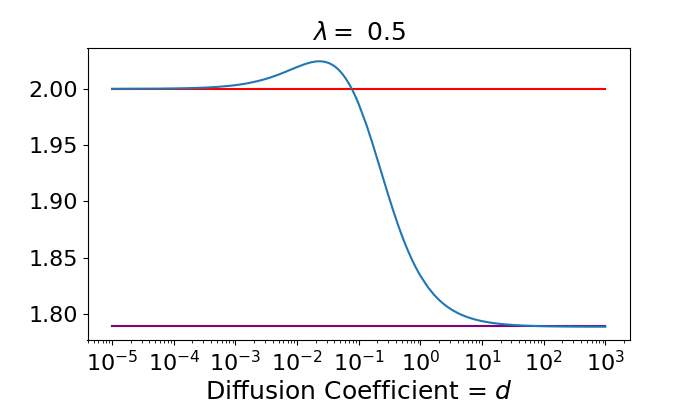}
    \includegraphics[width=0.45\textwidth]{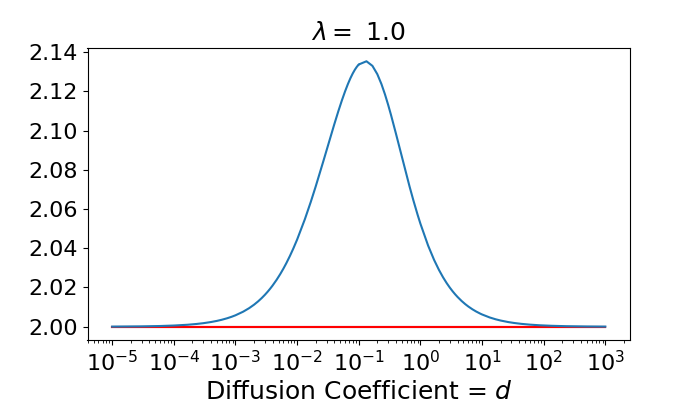}
    
    \includegraphics[width=0.45\textwidth]{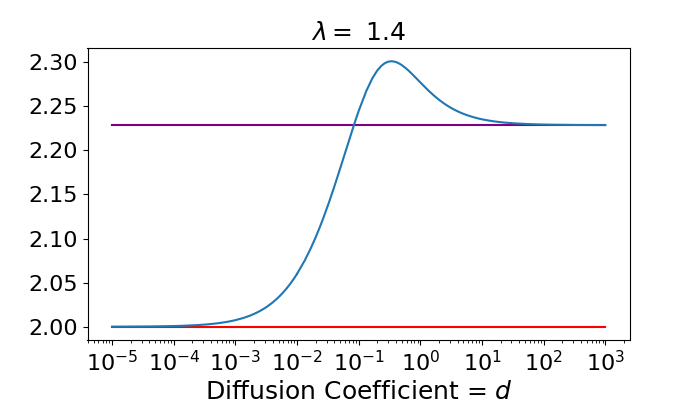}
    \includegraphics[width=0.45\textwidth]{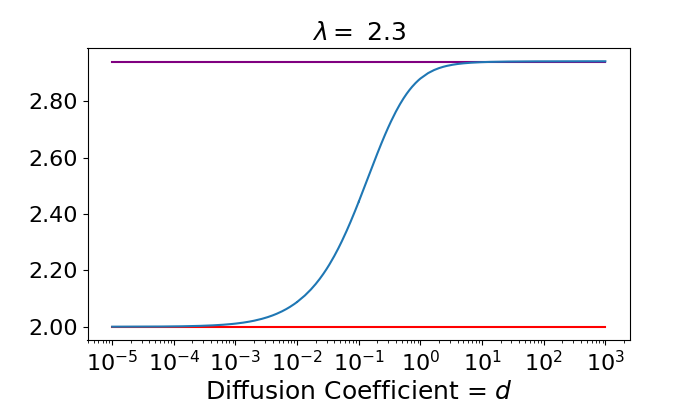}

   \caption{Total population curves for $K(x)=2+\cos(\pi x)$, $P(x)=2-\cos(2\pi x)$, and $r(x)=\parentheses{\frac{K(x)}{P(x)}}^\lambda$, where $\lambda=-1$, $0$, $0.5$, $1$, $1.4$ and $2.3$}\
   \label{Fig r=(K/P)^lambda}
\end{figure}

First, with very slow diffusion, the same limit total population of $M_\lambda(0)=2$ is observed in every figure, while the fast dispersal limit $M_\lambda(+\infty)$ varies as an increasing function of $\lambda$, in agreement with Theorem \ref{thm: M(infty) for r=(K/P)^beta}. For  $\lambda=-1$, we obtain negative correlation between $r$ and $\frac{K}{P}$, and $M(d)$ is a decreasing sigmoid-type function, which is in accordance with Theorem \ref{thm: correlation r and K/P}, and converges to $M_\lambda(+\infty)<M_\lambda(0)$ as $d\to+\infty$. The same behaviour of $M_\lambda(d)$ is observed for $\lambda=0$, leading to $M_\lambda(d)<\int_\Omega K\,dx$ for all $d>0$, as stated by \cite{Guo_2020}. 

For $\lambda=0.5$, the function $M_\lambda(d)$ takes a unimodal form, initially increasing due to the positive correlation between $r$ and $\frac{K}{P}$ (Theorem \ref{thm: correlation r and K/P}), reaching a maximum value, and eventually decreasing to a value below $M_\lambda(0)$.
For $\lambda=1$, $M_\lambda(d)$ is always greater than $\int_\Omega K\,dx$ with equal limit values as $d \to 0^+$ and $d\to+\infty$, thereby validating Theorem \ref{Lou_generalize} and Remark \ref{remark:Lou generalized}.

For $\lambda >1$, $r$ and $\frac{K}{P}$ are positively correlated, and Theorem \ref{thm: M(infty) for r=(K/P)^beta} guarantees that the limit value $M_\lambda(+\infty)$ exceeds $M_\lambda(0^+)$.  The transient behaviour, however, differs for $\lambda=1.4$ and $\lambda=2.3$, as for the first case $M_\lambda(d)$ is unimodal, attaining a maximum value in $(0,+\infty)$, whereas for $\lambda=2.3$, $M_\lambda(d)$ takes the monotone increasing sigmoid form.

Lastly, comparing all simulations, we observe the maximum and minimum population levels are non-decreasing functions of $\lambda$. Further analysis is underway to determine if this relation holds universally.
\end{example}

\section{Discussion}
\label{sec:discussion}

We start with briefly  summarizing the findings of the paper. 

\begin{enumerate}
\item
We identified the case when diffusion leads to population increase, for any of its values.
As a special case for $P \equiv 1$, this includes Lou's result in \cite{Lou}.

\item
Assuming positive (or negative) correlation between the growth rate  and the ratio of the carrying capacity to the dispersal strategy, 
which for high levels of diffusion characterizes per capita space-dependent available resources,
we determine when a small diffusion has a positive (negative) effect on the total population size compared to its immobile counterpart.

\item
In the particular case when the growth rate is a power function of the ratio $r=(K/P)^{\lambda}$, we can characterize the global population size for large dispersal speed.
We conclude that, once the species adopt a large enough diffusion rate, the total population exceeds the total of the carrying capacity, once 
the power function is convex, and is less if the function is concave. This fact has never been established before for the logistic equation with a regular diffusion. 
The two cases that have been already investigated in the particular case $P \equiv 1$ are $\lambda=1$  \cite{Lou} and $\lambda=0$ \cite{Guo_2020}.
We illustrate that the values   $\lambda=0$ and $\lambda=1$ are in some sense critical.
\end{enumerate}

Let us comment on the third item. Note that the intermediate behaviour of the total population size as a function of the diffusion coefficient exhibits distinct profiles depending on the value of $\lambda$, as illustrated in Fig.~\ref{Fig r=(K/P)^lambda}. Based on the results of this work, we conclude that for the case where $r=(K/P)^{\lambda}$, the following holds:


\begin{enumerate}
\item
For $\lambda\leq0$, the average population  decreases from its carrying capacity at $d=0$.
For $\lambda<0$, this is justified from $r=\alpha\parentheses{\frac{K}{P}}^\lambda$ representing a negative correlation between $r$ and $\frac{K}{P}$ and from Theorem \ref{thm: correlation r and K/P}, while the $\lambda=0$ case is covered by \cite{Guo_2020}. Furthermore,  since Theorem \ref{thm: M(infty) for r=(K/P)^beta} gives $M_\lambda(+\infty)<M_\lambda(0)$, we expect that $M_\lambda(d) < M_\lambda(0)$ for all $d\in(0,+\infty)$.

\item For $0<\lambda<1$,
$r=\alpha\parentheses{\frac{K}{P}}^\lambda$ leads to a positive correlation between $r$ and $\frac{K}{P}$, and Theorem \ref{thm: correlation r and K/P} implies that the average population exceeds the average carrying capacity for slow diffusion $d>0$. However, the inequality $M_\lambda(d)>\int_\Omega K\,dx$ cannot be satisfied for all $d>0$, specifically for $d$ large enough, since the total population of $u_{d,\lambda}$ converges to $M_\lambda(+\infty)<\int_\Omega K\,dx$ as $d\to+\infty$. Therefore, $M_\lambda(d)$ is maximized for some diffusion coefficient $d^*\in(0,+\infty)$.
\item For $\lambda=1$, Theorem~\ref{Lou_generalize} asserts that the total population exceeds the total carrying capacity for every $d>0$.
Due to the continuity of solutions to \eqref{eq. prob r=(K/P)lambda} with respect to $\lambda$, we expect that for values of $\lambda$ close to $1$, the average population $M_\lambda(d)$ is still maximized at some intermediate point $d_\lambda^*\in(0,+\infty)$.

\item
For $\lambda>1$ large enough, the asymptotic value $M_\lambda(+\infty)$ should also become large enough that $M_\lambda(d)$ is an increasing function of $d$ in $(0,+\infty)$.
\end{enumerate}

Based on the preceding analysis, we formulate the following conjecture:
\begin{conjecture*}
    Assume $K$, $P$ and $r=\alpha\parentheses{\frac{K}{P}}^\lambda$ satisfy hypothesis (A). Then there exists a critical value $\lambda^*>1$ such that 
    \begin{itemize}
        \item[a)] If $\lambda\in(0,\lambda^*)$, the function $M_\lambda(d)$ attains its maximum at a finite diffusion rate $d^*_\lambda\in(0,+\infty)$;
        
        \item[b)] If $\lambda\in(\lambda^*,+\infty)$, the function $M_\lambda(d)$ is strictly increasing in $(0,+\infty)$, and thus does not attain a maximum value.  
    \end{itemize}
\end{conjecture*}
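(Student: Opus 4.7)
The plan is to combine the boundary behaviour already established with a continuation argument in $\lambda$, exploiting the monotonicity of $M_\lambda(+\infty)$ from Theorem~\ref{thm: M(infty) for r=(K/P)^beta}. The case $\lambda\in(0,1]$ is essentially immediate: for $\lambda\in(0,1)$, Theorem~\ref{thm: correlation r and K/P} gives $M_\lambda(d)>\int_\Omega K\,dx=M_\lambda(0)$ for small $d>0$, while Theorem~\ref{thm: M(infty) for r=(K/P)^beta} together with $M_1(+\infty)=\int_\Omega K\,dx$ (Corollary~\ref{cor:linear_dependence}) yields $M_\lambda(+\infty)<\int_\Omega K\,dx$. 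Continuity of $d\mapsto M_\lambda(d)$ on the compactified half-line, with endpoint values supplied by Lemmas~\ref{Prop_1_DeAngelis} and \ref{Prop 2 DeAngelis}, then forces an interior maximum. The case $\lambda=1$ is Theorem~\ref{Lou_generalize} combined with Remark~\ref{remark:Lou generalized}: $M_1(d)>\int_\Omega K\,dx=M_1(0)=M_1(+\infty)$, so the supremum on $[0,+\infty]$ is again realized at an interior point.

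Second, I would extend the interior-maximum regime slightly above $1$ via joint continuity in $(d,\lambda)$. Standard Schauder theory applied to \eqref{eq. prob r=(K/P)lambda} shows that $u_{d,\lambda}$ depends continuously on $(d,\lambda)\in(0,+\infty)\times\R$ in $C^2(\bar\Omega)$, and Lemmas~\ref{Prop_1_DeAngelis} and \ref{Prop 2 DeAngelis} extend this continuity up to $d=0$ and $d=+\infty$ at the level of $M_\lambda(d)$. Since $M_1$ attains its maximum at some $d_1^\ast\in(0,+\infty)$ with $M_1(d_1^\ast)>M_1(+\infty)$, the strict inequality $M_\lambda(d_1^\ast)>M_\lambda(+\infty)$ persists in a right-neighborhood of $\lambda=1$, producing an interior maximizer $d_\lambda^\ast$ there.

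I would then set
\[
\lambda^\ast:=\sup\braces{\lambda\geq1 : M_\lambda \text{ attains its supremum at some } d_\lambda^\ast\in(0,+\infty)}.
\]
Finiteness of $\lambda^\ast$ would follow from a concentration/Laplace-type analysis: as $\lambda\to+\infty$, $r=\alpha(K/P)^\lambda$ concentrates on the maxima of $K/P$, and the ratio in \eqref{Eq.0 M(infty) increasing} blows up by Laplace's method whenever $K/P$ is non-constant, so $M_\lambda(+\infty)\to+\infty$ by Theorem~\ref{thm: M(infty) for r=(K/P)^beta}. A uniform bound $\norm{u_{d,\lambda}}_{L^\infty(\bar\Omega)}\leq C(K)$ on compact subsets of $d\in(0,+\infty)$, obtainable via sub/supersolutions as in the proof of Lemma~\ref{Prop_1_DeAngelis}, would then imply that for sufficiently large $\lambda$ the supremum can only be realized in the limit $d\to+\infty$, precluding an interior maximizer.

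The main obstacle is statement (b): establishing \emph{strict monotonicity} of $M_\lambda$ on $(0,+\infty)$ for every $\lambda>\lambda^\ast$, not merely the absence of an interior maximizer. The natural route is to differentiate \eqref{eq. prob r=(K/P)lambda} in $d$, obtain a linear elliptic equation for $v_{d,\lambda}:=\partial_d u_{d,\lambda}$, and rewrite $M_\lambda'(d)=\int_\Omega v_{d,\lambda}\,dx$ as a weighted integral involving $u_{d,\lambda}$ and $\nabla(u_{d,\lambda}/P)$ after testing against a suitable multiplier (analogous to the multiplier $K/(ru_d)$ used in Lemma~\ref{Lemma 1}). One would then seek a $\lambda$-dependent functional whose sign controls $M_\lambda'(d)$ uniformly in $d>0$, so that positivity beyond $\lambda^\ast$ delivers strict monotonicity. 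Uniformity in $d$ is the hard point: the signs at $d=0^+$ and $d=+\infty$ are already governed by Lemma~\ref{Lemma 1} and Theorem~\ref{thm: M(infty) for r=(K/P)^beta}, respectively, but the intermediate regime appears to require either a new global integral identity or a Sturm-Liouville-type comparison, and I expect this is where the bulk of the technical work in converting the conjecture into a theorem would lie.
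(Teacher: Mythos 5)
The statement you are proving is labelled as a \emph{Conjecture} in the paper precisely because the authors do not prove it: they only assemble the boundary evidence (Theorems~\ref{thm: correlation r and K/P}, \ref{Lou_generalize}, \ref{thm: M(infty) for r=(K/P)^beta}, Corollary~\ref{cor:linear_dependence}) and numerical simulations (Example~\ref{example:r(K/P)^lambda}). Your treatment of $\lambda\in(0,1]$ is sound and matches the paper's own reasoning in the Discussion: the endpoint comparison $M_\lambda(0)>M_\lambda(+\infty)$ together with the slow-dispersal inequality forces an interior maximizer, modulo continuity of $d\mapsto M_\lambda(d)$ on the compactified half-line (which the paper also takes for granted). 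Beyond that, however, your proposal has two genuine gaps and one outright error. First, defining $\lambda^\ast$ as a supremum does not produce the dichotomy the conjecture asserts: you would need to show that the set of $\lambda$ for which $M_\lambda$ has an interior maximizer is an \emph{interval}, i.e.\ some monotonicity in $\lambda$ of the qualitative shape of $M_\lambda$, and no such argument is offered. Second, you concede that part (b) --- strict monotonicity of $M_\lambda$ on all of $(0,+\infty)$ for $\lambda>\lambda^\ast$ --- is unproved; since that is half of the statement, the proposal cannot be considered a proof.

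The concrete error is in your finiteness argument for $\lambda^\ast$. You claim that the ratio in \eqref{Eq.0 M(infty) increasing} ``blows up by Laplace's method,'' so that $M_\lambda(+\infty)\to+\infty$. It does not: writing $\phi=K/P$ and $d\mu=K\,dx$, the quantity $F(\lambda)=\int_\Omega\phi^{\lambda-1}\,d\mu\big/\int_\Omega\phi^{\lambda-2}\,d\mu$ is a ratio of consecutive moments of the bounded function $\phi\in C^2(\overline{\Omega})$, and therefore converges to $\max_{\overline{\Omega}}(K/P)$ as $\lambda\to+\infty$; hence $M_\lambda(+\infty)$ increases to the \emph{finite} limit $\bigl(\max_{\overline{\Omega}}K/P\bigr)\int_\Omega P\,dx$. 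Moreover, the maximum principle applied at an interior maximum of $u_{d,\lambda}/P$ gives $u_{d,\lambda}\leq P\max_{\overline{\Omega}}(K/P)$, so $M_\lambda(d)$ is bounded by that same constant for every $d$ and $\lambda$; your proposed contradiction (uniform bound on compacta versus divergence at $d=+\infty$) therefore collapses, and finiteness of $\lambda^\ast$ remains open. In short, your outline reproduces what the paper already establishes about the endpoints $d\to0^+$ and $d\to+\infty$, but the threshold structure, the finiteness of $\lambda^\ast$, and the global monotonicity in (b) are all still missing, which is exactly why the statement is posed as a conjecture.
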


With these empirical results and some statements justified, especially in the asymptotic cases of very slow or very fast dispersal, the problem
of the qualitative description of the population size as a function of the diffusion coefficient still leaves many questions without answer.
Let us discuss some topics emerging from the research of the current paper.

\begin{enumerate}
\item
The first group of questions, as mentioned above, is related to the form of the total population as a function of the diffusion coefficient. 
\begin{enumerate}
\item
Investigation of the case $r=f (K/P)$, where $f(x)=x^{\lambda}$, opens ways to the study of the dependency of the population on the diffusion coefficient for
$f(x)=ax+b$, fractional linear, exponential (in this case, we expect the behaviour similar to  $f(x)=x^{\lambda}$ with large positive $\lambda$) etc. 
\item
What are the relations of $P,r,K$ leading to the monotone or the unimodal dependency of the total population on the diffusion coefficient?
\item
Prove or disprove the conjecture that, once $r=f(K/P)$, where $f$ is an increasing function, the total population for large diffusion exceeds the total carrying capacity for 
the convex $f$ and is less for concave $f$. Note that the fact of equality for linear $h$  follows from Corollary~\ref{cor:linear_dependence}.
\end{enumerate}
\item
Maximizing the total population size is not directly connected to the success in the compertition, or evolution stability, compare, for example, \cite{CCBook,Korobenko2012,Korobenko2014,Lou}.
However, there is a process which is closely connected to the population size - harvesting. Consider for the model
\begin{equation}
\label{harvesting}
    \begin{cases}
        d\Delta\parentheses{\frac{u}{P}}+ru\parentheses{1-\frac{u}{K}} - Eu=0, & x \in \Omega,  \\
        \frac{\partial}{\partial n}\parentheses{\frac{u}{P}}=0,  & x \in \partial\Omega
    \end{cases} 
\end{equation} 
the problem of maximizing the Maximum Sustainable Yield (MSY)  $\displaystyle   \int_{\Omega} E(x)u(x)~dx$. 
In \cite{Brav} for $P \equiv K$, it was noticed that $E=r/2$ leads to MSY. With some non-constant $P \not\equiv K$,
for which $r$ is MSY achieved? If, in addition to $E$, we can control the diffusion coefficient $d$, how is MSY 
$d$-dependent? 

\item
We explored the dependency of the average solution on $\lambda$ when
$r=(K/P)^{\lambda}$. Let us contemplate whether some of the observations in numerical examples can be justified theoretically.
\begin{enumerate}
\item
Is the maximum population value increasing as the function of $\lambda$, not only the limit population value for very fast dispersals?
\item
There is analysis of the limit $M_{\lambda}(+\infty)$. Has this function a limit for $\lambda \to +\infty$?
The same question can be considered for the supremum value of the population. 
However, if the conjecture is justified, for large $\lambda$, the value of $M_{\lambda}(+\infty)$ is also the supremum value.
\item
 In our analysis, we fixed $P$ and $K$ and got variable $r$. Is there any differece if $r,K$ are fixed, and we modify 
$\displaystyle  P=K/r^{1/\lambda}$?
\end{enumerate}
\end{enumerate}

\section*{Acknowledgment}

The authors  were  partially supported by the NSERC Discovery Grant \#  RGPIN-2020-03934.

\section{Appendix: Some Additional Inequalities on Weighted Averages}

In the Appendix, we consider some weighted averages that relate the total population with the parameters of Equation~\eqref{intro2}.

\begin{theorem}
     If $r$ and $K$ are positively correlated and  $\displaystyle\int_\Omega\nabla K\cdot \nabla P\,dx<0$, we get for small values of $d>0$ the weighted inequality
        \begin{equation*}
            \int_\Omega Pu_d\,dx>\int PK\,dx.
        \end{equation*}
\end{theorem}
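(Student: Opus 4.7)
The plan is to imitate the proof of Lemma \ref{Lemma 1}, inserting an extra weight $P$ in the test function so that the reaction term yields $P(K-u_d)$ instead of $K-u_d$. Concretely, multiply the first equation of \eqref{intro2} by $PK/(ru_d)$ and integrate over $\Omega$; since $ru_d(1-u_d/K)\cdot PK/(ru_d)=P(K-u_d)$ and the boundary contribution vanishes thanks to $\frac{\partial}{\partial n}(u_d/P)=0$, integration by parts yields the exact identity
\[
\int_\Omega P(K-u_d)\,dx \;=\; d\int_\Omega \nabla\!\parentheses{\frac{u_d}{P}}\cdot\nabla\!\parentheses{\frac{PK}{ru_d}}\,dx.
\]

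I would next pass to the limit $d\to 0^+$ on the right-hand side. By Lemma \ref{Prop_1_DeAngelis}, $u_d\to K$ in $L^\infty(\Omega)\cap W^{1,2}(\Omega)$, so $u_d$ stays uniformly bounded away from $0$ and $\nabla u_d\to\nabla K$ in $L^2(\Omega)$. The same $L^\infty$--$L^2$ splitting used in the proof of Lemma \ref{Lemma 1}, combined with the fact that $1/u_d,\,K/u_d^2,\,r,\,P$ are uniformly controlled, then gives
\[
\int_\Omega \nabla\!\parentheses{\frac{u_d}{P}}\cdot\nabla\!\parentheses{\frac{PK}{ru_d}}\,dx \;\longrightarrow\; \int_\Omega \nabla\!\parentheses{\frac{K}{P}}\cdot\nabla\!\parentheses{\frac{P}{r}}\,dx \quad \text{as } d\to 0^+.
\]
Hence the desired inequality $\int_\Omega Pu_d\,dx>\int_\Omega PK\,dx$ for small $d>0$ is equivalent to showing that this limit integral is strictly negative.

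To extract the sign, write $r=h(K)$ with $h'\geq 0$ (positive correlation), so $\nabla r=h'(K)\nabla K$. A direct expansion gives
\[
\nabla\!\parentheses{\frac{K}{P}}\cdot\nabla\!\parentheses{\frac{P}{r}} = \frac{1+Kh'(K)/r}{rP}\,\nabla K\cdot\nabla P \;-\; \frac{h'(K)|\nabla K|^2}{r^2} \;-\; \frac{K|\nabla P|^2}{rP^2}.
\]
The last two integrals over $\Omega$ are non-positive, with the third strictly so since $P$ is non-constant by assumption (A); they form a built-in negative buffer. The hypothesis $\int_\Omega\nabla K\cdot\nabla P\,dx<0$, together with the boundedness of the weight $(rP)^{-1}(1+Kh'(K)/r)$ on $\bar\Omega$ guaranteed by (A), should then deliver the desired overall negative sign.

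I expect this last step to be the main obstacle. The hypothesis controls only the \emph{unweighted} integral of $\nabla K\cdot\nabla P$, whereas the first term of the expansion is a weighted integral with a spatially varying coefficient, so one cannot simply pull the weight outside. The argument is immediate if $\nabla K\cdot\nabla P\leq 0$ holds pointwise; in the integral-only case one must quantitatively compare the oscillation of $(rP)^{-1}(1+Kh'(K)/r)$ against the strictly negative pure-quadratic contributions $-\int_\Omega h'(K)|\nabla K|^2/r^2\,dx -\int_\Omega K|\nabla P|^2/(rP^2)\,dx$, using the uniform bounds from (A) to show that the negative terms dominate any positive residual of the first term.
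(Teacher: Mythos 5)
Your proposal follows the paper's proof essentially step for step: the same test function $\frac{KP}{ru_d}$, the same integration by parts using the no-flux condition, the same passage to the limit $d\to0^+$ via Lemma~\ref{Prop_1_DeAngelis} and the $L^\infty$--$L^2$ splitting of Lemma~\ref{Lemma 1}, and the same reduction to the sign of $\int_\Omega\nabla\parentheses{\frac{K}{P}}\cdot\nabla\parentheses{\frac{P}{r}}\,dx$; your three-term expansion of that integrand coincides with the paper's after substituting $\nabla r=h'(K)\nabla K$. The one thing you flag as the ``main obstacle'' is indeed the crux, but you should be aware that the paper does not overcome it either: its proof writes the integrand as
\begin{equation*}
\int_\Omega\frac{P\brackets{r\nabla K+K\nabla r}\cdot\nabla P-Kr|\nabla P|^2-P^2\nabla K\cdot\nabla r}{P^2r^2}\,dx
\end{equation*}
and then simply asserts this is $<0$, implicitly treating the spatially varying positive weight $\frac{r+Kh'(K)}{Pr^2}$ multiplying $\nabla K\cdot\nabla P$ as if it could be pulled out of the integral. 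As you correctly observe, the hypothesis only controls the unweighted integral $\int_\Omega\nabla K\cdot\nabla P\,dx<0$, so the stated assumptions do not obviously force the weighted first term to be dominated by the two non-positive quadratic terms; a pointwise condition $\nabla K\cdot\nabla P\le0$ (not identically zero), or an explicit quantitative comparison of the kind you sketch, would be needed to make the conclusion rigorous. In short, your attempt reproduces the paper's argument, including its unaddressed weak point, and is more candid than the paper about where the logic is incomplete.
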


\begin{proof}
    Multiply \eqref{intro2} by $\frac{KP}{r u_d}$ and integrate, to obtain
        \begin{equation*}
            -d\int_\Omega\nabla\parentheses{\frac{u_d}{P}}\cdot\nabla\parentheses{\frac{KP}{r u_d}}\,dx+\int_\Omega(PK-Pu_d)\,dx=0.
        \end{equation*}
        Then prove that
        \begin{equation*}
            \int\module{\nabla\parentheses{\frac{u_d}{P}}\cdot\nabla\parentheses{\frac{KP}{r u_d}}-\nabla\parentheses{\frac{K}{P}}\cdot\nabla\parentheses{\frac{P}{r}}}\,dx\to0 \textrm{ as } d\to0, 
        \end{equation*}
        as done on the proof of Lemma \eqref{Lemma 1}, and use that
        \begin{align*}
            \int_\Omega\nabla\parentheses{\frac{K}{P}}\cdot\nabla\parentheses{\frac{P}{r}}\,dx& =\int_\Omega\frac{P\nabla K-K\nabla P}{P^2}\cdot\frac{r\nabla P-P\nabla r}{r^2}\,dx
            \\ & =\int_\Omega\frac{P[r\nabla K+K\nabla r]\cdot\nabla P-Kr|\nabla P|^2-P^2\nabla K\cdot\nabla r}{P^2r^2}\,dx<0,
        \end{align*}
\end{proof}

\begin{theorem}\label{thm: weighted ineq 2}
    Suppose that $K$ and $P$ are linearly independent and are correlated by a function $h$, i.e. $P=h(K)$. 
    
    If $h'(t)\geq\frac{h(t)}{t}$ for $t\in(0,+\infty)$, then for $d>0$ small,
    \begin{equation*}
        \int_\Omega ru_d^2\,dx>\int_\Omega rKu_d\,dx.
    \end{equation*}

    And if $h'(t)\leq\frac{h(t)}{t}$ for $t\in(0,+\infty)$, then for $d>0$ small,
    \begin{equation*}
        \int_\Omega ru_d^2\,dx<\int_\Omega rKu_d\,dx 
<\int_\Omega rK^2\,dx.
    \end{equation*}
   
\end{theorem}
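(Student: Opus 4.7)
The plan is to follow the template of Lemma \ref{Lemma 1} and Corollary \ref{corollary:K_equals_r}: multiply the stationary equation \eqref{intro2} by a weight built from $K$ and $u_d$, integrate over $\Omega$, kill the boundary term via $\partial(u_d/P)/\partial n=0$, and pass to the limit $d\to0^+$ using the convergence $u_d\to K$ in $L^\infty(\overline{\Omega})\cap W^{1,2}(\Omega)$ from Lemma \ref{Prop_1_DeAngelis}. Each of the two target inequalities calls for a different multiplier, and in both cases the sign of a single limiting integral drives the conclusion.

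To compare $\int_\Omega r u_d^2\,dx$ with $\int_\Omega r K u_d\,dx$, I would multiply \eqref{intro2} by $K$ and integrate. Green's identity yields
\begin{equation*}
\int_\Omega r u_d^2\,dx-\int_\Omega r K u_d\,dx = -d\int_\Omega \nabla K\cdot\nabla\parentheses{\frac{u_d}{P}}\,dx.
\end{equation*}
Since $\nabla(u_d/P)=\nabla u_d/P-u_d\nabla P/P^2$ and $1/P,\nabla P/P^2\in L^\infty(\overline{\Omega})$, the $W^{1,2}$ convergence gives $\nabla(u_d/P)\to\nabla(K/P)$ in $L^2(\Omega)$. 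With $P=h(K)$ a direct computation gives
\begin{equation*}
\int_\Omega \nabla K\cdot\nabla\parentheses{\frac{K}{P}}\,dx=\int_\Omega\frac{h(K)-Kh'(K)}{h(K)^2}|\nabla K|^2\,dx.
\end{equation*}
Under $h'(t)\ge h(t)/t$ the integrand is pointwise non-positive, and it cannot vanish identically: equality $h(t)=th'(t)$ on the range of $K$ forces $h(t)=ct$, making $P$ proportional to $K$ and contradicting linear independence. Hence dividing the first displayed identity by $d$ and sending $d\to0^+$ yields a strictly positive limit, so $\int_\Omega r u_d^2\,dx>\int_\Omega r K u_d\,dx$ for $d>0$ small. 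The reverse hypothesis $h'(t)\le h(t)/t$ flips the sign of the limiting integral and therefore of the inequality.

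For the chain $\int_\Omega r K u_d\,dx<\int_\Omega r K^2\,dx$ in the second case, I would instead multiply \eqref{intro2} by $K^2/u_d$, which is admissible because $u_d$ is uniformly bounded below by a positive constant for small $d>0$. Integration gives
\begin{equation*}
\int_\Omega r K u_d\,dx-\int_\Omega r K^2\,dx=-d\int_\Omega \nabla\parentheses{\frac{K^2}{u_d}}\cdot\nabla\parentheses{\frac{u_d}{P}}\,dx.
\end{equation*}
Writing $\nabla(K^2/u_d)=2K\nabla K/u_d-K^2\nabla u_d/u_d^2$ and using $u_d\to K$ in $L^\infty\cap W^{1,2}$, one checks that $\nabla(K^2/u_d)\to\nabla K$ in $L^2(\Omega)$, so the same limit $\int_\Omega\frac{h(K)-Kh'(K)}{h(K)^2}|\nabla K|^2\,dx$ controls the right-hand side. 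Under $h'(t)\le h(t)/t$ this limit is strictly positive, so the right-hand side is strictly negative for $d>0$ small, giving the claim.

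The main obstacle is passing from the non-strict pointwise hypothesis on $h$ to a strict integral inequality in the limit. This is exactly where the linear independence assumption enters: it excludes the ODE equality case $h(t)=ct$, guaranteeing that $h(K)-Kh'(K)$ is not identically zero on the set where $\nabla K\ne 0$. The secondary technical point is the $L^2$ convergence of products of gradients, which requires the full $W^{1,2}$ statement of Lemma \ref{Prop_1_DeAngelis} and not merely $L^\infty$ convergence, but this has already been carried out in the proof of Lemma \ref{Lemma 1} and can be reused almost verbatim.
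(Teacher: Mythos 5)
Your proposal is correct, and the first half (comparing $\int_\Omega ru_d^2\,dx$ with $\int_\Omega rKu_d\,dx$) is essentially the paper's argument: multiply \eqref{intro2} by $K$, integrate by parts, pass to the limit $d\to0^+$ using Lemma \ref{Prop_1_DeAngelis}, and read off the sign from $\nabla\parentheses{\frac{K}{P}}\cdot\nabla K=(h(K)-Kh'(K))\frac{|\nabla K|^2}{P^2}$. Where you genuinely diverge is the last inequality $\int_\Omega rKu_d\,dx<\int_\Omega rK^2\,dx$: the paper obtains it with no further PDE input, purely algebraically, by combining the already-established $\int_\Omega rKu_d\,dx>\int_\Omega ru_d^2\,dx$ with the Cauchy--Schwarz bound $\parentheses{\int_\Omega rKu_d\,dx}^2\leq\int_\Omega rK^2\,dx\int_\Omega ru_d^2\,dx$ and dividing by $\int_\Omega rKu_d\,dx$. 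You instead run the integration-by-parts template a second time with the multiplier $K^2/u_d$ and show $\nabla(K^2/u_d)\to\nabla K$ in $L^2$, which lands on the same limiting integral; this is valid (and reassuringly systematic), but it costs an extra limit computation and the verification that $u_d$ is uniformly bounded below, both of which the Cauchy--Schwarz shortcut avoids. A point in your favour: you address why the limiting integral is \emph{strictly} nonzero, via linear independence of $K$ and $P$ -- the paper asserts the strict inequalities without comment. Your phrasing that the equality case ``forces $h(t)=ct$'' is a slight shortcut, since the pointwise identity $h(K)=Kh'(K)$ is only obtained where $\nabla K\neq0$; the clean way to finish is to note that $\nabla\parentheses{\frac{K}{P}}\cdot\nabla K\equiv0$ forces $\module{\nabla\parentheses{\frac{K}{P}}}^2\equiv0$, hence $K/P$ constant, contradicting linear independence.
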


\begin{proof}
        Multiply \eqref{intro2} by $K$ and integrate:
        \begin{equation*}
            -d\int_\Omega\nabla\parentheses{\frac{u_d}{P}}\cdot\nabla K\,dx+\int_\Omega ru_d(K-u_d)\,dx=0.
        \end{equation*}
        Notice that
        \begin{equation*}
            \int\module{\nabla\parentheses{\frac{u_d}{P}}\cdot\nabla K-\nabla\parentheses{\frac{K}{P}}\cdot\nabla K}\,dx\to0 \textrm{ as } d\to0, 
        \end{equation*}
        which implies
        \begin{equation*}
            \int_\Omega ru_d(K-u_d)\,dx=d\parentheses{\int_\Omega\nabla\parentheses{\frac{K}{P}}\cdot\nabla K\,dx+o(1)}
        \end{equation*}
        and it is enough to use
        \begin{equation*}
            \nabla\parentheses{\frac{K}{P}}=\frac{P\nabla K-K\nabla P}{P^2}=\frac{h(K)\nabla K-Kh'(K)\nabla K}{P^2}
\end{equation*}
      \begin{equation*}
\implies \nabla\parentheses{\frac{K}{P}}\cdot\nabla K=(h(K)-Kh'(K))\frac{|\nabla K|^2}{P^2}\leq(\geq)0.
        \end{equation*}
In the case $h'(t)\leq\frac{h(t)}{t}$, we find, for small values of $d>0$,
\begin{equation}\label{Eq.1 thm weig ineq 2}
    \int_\Omega rKu_d\,dx>\int_\Omega ru_d^2\,dx
\end{equation}
From the Cauchy–Schwarz inequality, 
\begin{equation}\label{Eq.2 thm weig ineq 2}
    \parentheses{\int_\Omega rKu_d\,dx}^2=\parentheses{\int_\Omega r^\frac{1}{2}Kr^\frac{1}{2}u_d\,dx}^2\leq\int_\Omega rK^2\,dx\int_\Omega ru_d^2\,dx.
\end{equation}
Finally, \eqref{Eq.1 thm weig ineq 2} and \eqref{Eq.2 thm weig ineq 2} lead to
\begin{align*}
\int_\Omega rK u_d\,dx ~\int_\Omega r K u_d\,dx  & \leq\int_\Omega rK^2\,dx ~\int_\Omega ru_d^2\,dx 
\\
&  <  \int_\Omega rK^2\,dx   \int_\Omega rKu_d\,dx  
\end{align*}
hence, for slow diffusion,
\begin{equation*}
\int_\Omega r K u_d\,dx   
<  \int_\Omega rK^2\,dx,
\end{equation*}
which concludes the proof.

\end{proof}

\begin{remark}
    Note that if both inequalities $h'(t)\geq\frac{h(t)}{t}$ and $h'(t)\leq\frac{h(t)}{t}$ are satisfied for all $t\in(0,+\infty)$, then we must have the equality
    \begin{equation*}
        \frac{h'(t)}{h(t)}=\frac{1}{t}\implies\ln|h(t)|=\ln|t|+\alpha \implies h(t)=\alpha t, 
    \end{equation*}
    for some constant $\alpha >0$ and for all $t>0$, leading to $P=h(K)=\alpha K$. Since $K$ and $P$ are not linearly dependent, we conclude that the two items from Theorem \ref{thm: weighted ineq 2} cannot hold simultaneously, and there is no contradiction in the theorem's statement.
\end{remark}

\end{document}